\documentclass[11pt]{amsart}

\usepackage{latexsym}
\usepackage{amssymb}
\usepackage{amsmath}
\usepackage{color}
\usepackage{bbm}

\usepackage{graphicx}
\usepackage{tcolorbox}
\usepackage{tabularx}

\usepackage{tikz}
\usepackage{xparse}

\usepackage[a4paper]{geometry}
\NewDocumentCommand\DownArrow{O{2.0ex} O{black}}{%
   \mathrel{\tikz[baseline] \draw [<-, line width=0.5pt, #2] (0,0) -- ++(0,#1);}
}
\NewDocumentCommand\UpArrow{O{2.0ex} O{black}}{%
   \mathrel{\tikz[baseline] \draw [->, line width=0.5pt, #2] (0,0) -- ++(0,#1);}
}

\newtheorem{theorem}{Theorem}[section]
\newtheorem{lemma}[theorem]{Lemma}
\newtheorem{proposition}[theorem]{Proposition}
\newtheorem{corollary}[theorem]{Corollary}
\newtheorem{definition}[theorem]{Definition}

\newtheorem{remark}[theorem]{Remark}

\newcommand{\cl}[1]{\mathcal{#1}}
\newcommand{\bb}[1]{\mathbb{#1}}

\begin{document}

\title[Spectral smoothings for compact quantum groups]{Convergence of Spectral Smoothings for Compact Quantum Group Actions}

\author[T. Camper]{Trevor Camper}
\address{Department of Mathematics\\
Dartmouth College\\ 209 Kemeny Hall\\ Hanover \\ NH 03755 \\ USA}
\email{trevor.camper@dartmouth.edu}

\author[D. Giannakis]{Dimitrios Giannakis}
\address{Department of Mathematics\\ Dartmouth College\\
337 Kemeny Hall\\ Hanover \\ NH 03755 \\ USA}
\email{dimitrios.giannakis@dartmouth.edu}

\author[G. Hoefer]{Gage Hoefer}
\address{Department of Mathematics \\ Dartmouth College \\ 314 Kemeny Hall \\ Hanover \\ NH 03755 \\ USA}\email{gage.hoefer@dartmouth.edu}

\date{\today}
\begin{abstract}
In this note, we study quantum Gromov-Hausdorff convergence of operator systems obtained using smoothings of ${\rm C}^{*}$-algebras, where our smoothings arise from the action of a compact quantum group on the ${\rm C}^{*}$-algebra. Specifically, we show how if $\mathbb{G}$ is a co-amenable compact quantum group acting on unital ${\rm C}^{*}$-algebra $\mathcal{A}$, and both are equipped with suitable quantum metrics, bounded approximate identities in $L^{1}(\mathbb{G})$ may be used for approximations of the original space $\mathcal{A}$ under the quantum Gromov-Hausdorff distance. We finish by presenting some applications to the convergence of spectral truncations of compact groups and connections to previous results. 
  
\end{abstract}
\maketitle



\section{Introduction}
Motivated by questions arising in high energy physics, M. A. Rieffel developed the notion of a ``quantum" compact metric space, and established the study of convergence for classes of these spaces with respect to a particular metric (see \cite{rieffel, rieffel_two}). This metric, defined as a noncommutative analogue of the Gromov-Hausdorff distance for compact metric spaces, is quite natural to consider in the study of noncommutative geometry and ${\rm C}^{*}$-algebras more broadly. Indeed: the original definition of a compact quantum metric space was inspired by the work of Connes' in \cite{Connes89}, in which he first considered spectral triples $(\mathcal{A}, H, D)$ where $\mathcal{A}$ is a $*$-algebra represented as bounded linear operators on a Hilbert space $H$, and $D$ is a (usually unbounded) self-adjoint operator on $H$ with compact resolvent such that the commutator $[D, a]$ is bounded for all $a \in \mathcal{A}$. Given a spectral triple, one can define a metric $\rho_{D}$ on the state space $S(\mathcal{A})$ via the formula
\begin{gather*}
    \rho_{D}(\phi, \psi) = \sup\{|\phi(a)-\psi(a)|: \; \|[D, a]\| \leq 1\}.
\end{gather*}
\noindent If $M$ is a compact Riemannian spin manifold, with $\mathcal{A} = C(M)$ and $D = D_{M}$ is the corresponding Dirac operator, then $\rho_{D}$ precisely recovers the standard Riemannian metric on $M$ (see \cite[Proposition 1]{Connes89}). Rieffel pointed out that--- using a similar approach--- if $(X, \rho)$ is a general compact metric space, by defining the Lipschitz seminorm $L_{\rho}$ on $\mathcal{A} = C(X)$ via 
\begin{gather*}
    L_{\rho}(f) = \sup\{|f(x)-f(y)|/\rho(x, y): \; x \neq y\},
\end{gather*}
\noindent the metric information of $(X, \rho)$ can be completely recovered via the induced metric
\begin{gather*}
    \rho_{L}(\phi, \psi) = \sup\{|\phi(f)-\psi(f)|: \; L_{\rho}(f) \leq 1\}
\end{gather*}
\noindent defined on $S(C(X))$, the state space of the commutative unital ${\rm C}^{*}$-algebra $C(X)$. Thus, a noncommutative analogue of a compact metric space should rely in some way on a metric $L$ defined over $\mathcal{A}$ (where $\mathcal{A}$ is a general unital ${\rm C}^{*}$-algebra) which plays the role of $L_{\rho}$ in the commutative case. When $\mathcal{A}$ is part of a spectral triple, $L$ is given by $L(a) = \|[D, a]\|$ as seen before. 

Under this framework, significant work has been done studying various compact quantum metric spaces and the properties of convergence for approximating spaces (see \cite{AgKaadKyed22, ConnesSuij21, dand_lizzi_mart, ges, kk, kk_two, Kerr03, Lat16, Lat22, Leimbach25, LeimSuij24, OzawaRieffel05, rieffel_two, rieffel_three, Suij21}, and \cite{Toyota23} for a non-exhaustive list). The approximating spaces in most of these settings are constructed as finite-rank approximations of the multiplier algebra of the underlying Hilbert space, where the projections arise from the eigenfunctions of an associated Dirac-type operator. These naturally correspond to a Toeplitz approximation of this multiplier algebra (see \cite{bs}). However, there are many classical ways to produce approximations using the spectral information of an operator. One of the most celebrated methods in spectral theory is to construct the so-called ``heat kernel" of the associated operator (see \cite{Evans22}). 

\indent In many circumstances, convolution against this heat kernel acts as an approximate identity in the small time limit (see for example, \cite{Evans22} in the case of the Laplacian) and thus smooth approximations of the multiplier algebra can be obtained in this manner. In the case of more general ${\rm C}^{*}$-algebras, an appropriate form of convolution must be introduced. In 1984, Werner introduced the \textit{quantum harmonic analysis (QHA)} which provides a notion of convolution between functions and bounded operators when translation is available on the underlying Hilbert space (see \cite{Werner84}). QHA has seen a recent resurgence in the literature; see, for instance, \cite{FulscheGalke25} and the references therein for recent papers on its development, specifically in the case when the Hilbert space is $L^{2}(G, \mu_{G})$ for a locally compact, abelian group $G$. 

\indent Motivated by convolution against the heat kernel and QHA more generally, we construct families of operator systems which act as approximations for a unital ${\rm C}^{*}$-algebra, dependent on the (ergodic) action of a co-amenable compact quantum group on it. More specifically, using the analogue of function-operator convolution in the setting of compact quantum groups and their actions on ${\rm C}^{*}$-algebras, we construct operator systems converging to a target ${\rm C}^{*}$-algebra $\mathcal{A}$ using convolution against bounded approximate identities in $L^{1}(\mathbb{G})$, where $\mathbb{G}$ is the co-amenable compact quantum group with $\mathbb{G} \curvearrowright \mathcal{A}$. This provides an alternative construction for approximating spaces similar to (but distinct from) recent work in the literature (see, for instance \cite{ges, Leimbach25, Li09, rieffel_three, Toyota23} for a few examples). We provide an extensive discussion on how our results generalize the framework put forth in \cite{Werner84} for classical compact groups $G$ acting on unital ${\rm C}^{*}$-algebras. Additionally, we show how in the classical setting the convergence can be obtained through approximate identities which are also finite-rank integral operators on $L^{2}(G, \mu_{G})$, which recovers the classical Toeplitz approximations of the multiplier algebra. In particular, our results generalize the results recently obtained in \cite{ges}, which proved quantum Gromov-Hausdorff convergence of the state spaces of $C(G)$.

The article is organized as follows. Section \ref{sec:preliminaries} is preliminary, and meant to fix terminology and summarize results covering operator systems and compact quantum metric spaces, the quantum Gromov-Hausdorff distance, compact quantum groups, and their actions. Section \ref{sec:approximations_from_cqg_actions} includes our main results, discussing convergence of smoothings of ${\rm C}^{*}$-algebras obtained through ergodic actions of co-amenable compact quantum groups. Section \ref{sec:examples} provides a few natural examples which arise, and touches on connections with other work.


\section{Preliminaries}\label{sec:preliminaries}

\subsection{Operator systems and compact quantum metric spaces} Let $H, K, L$ denote Hilbert spaces; all inner products are assumed to be linear in the second variable. For $n \in \mathbb{N}$ we write $M_{n}$ to denote the $n\times n$ matrices over $\mathbb{C}$. We write $\mathcal{B}(H, K)$ for the space of all bounded linear operators from $H$ into $K$ and set $\mathcal{B}(H) := \mathcal{B}(H, H)$ for the ${\rm C}^{*}$-algebra of all bounded linear operators on $H$, and $I$ for the identity operator on $H$, when the context is clear. If $(X, \|\cdot\|)$ is a normed space, for $\delta > 0$ we let
\begin{gather*}
    \mathcal{B}_{\delta}^{X}(0) := \{x \in X: \; \|x\| \leq \delta\}
\end{gather*}
\noindent denote the closed ball of radius $\delta$ in $X$. If $(\Omega, \Sigma, \mu)$ is a $\sigma$-finite measure space, we write $1_{\Omega}: \Omega\rightarrow \mathbb{R}$ for the constant function $1_{\Omega}(x) = 1$ when $x \in \Omega$. We denote the \textit{flip map} by
\begin{gather*}
    \Sigma: H\otimes K \rightarrow K\otimes H, \;\;\;\; \xi\otimes \eta \mapsto \eta\otimes \xi.
\end{gather*}
\noindent Generally, if $x \in \mathcal{B}(H, K)$ we let ${\rm Ad}_{x}: \mathcal{B}(H)\rightarrow \mathcal{B}(K)$ denote the mapping $y \mapsto xyx^{*}$. We also will use the \textit{leg numbering notation}: if $H, K, L$  are as before and $x \in \mathcal{B}(H\otimes K)$, then
\begin{gather*}
    x_{12} := x\otimes I, \;\;\;\; x_{23} := I\otimes x, \;\;\;\; x_{13} := ({\rm id}\otimes {\rm Ad}\Sigma)x_{12} = ({\rm Ad}\Sigma \otimes {\rm id})x_{23},
\end{gather*}
\noindent where we assume $I$ (and ${\rm id}$) are acting without ambiguity on their respective spaces.

An \textit{operator space} $\mathcal{X}$ is a norm-closed subspace of $\mathcal{B}(H, K)$. We write $M_{n}(\cl{X})$ to denote the space of $n\times n$ matrices whose entries are in $\cl{X}$; note that this can canonically be given an operator space structure, where the norm is inherited from $\cl{X}$. An \textit{operator system} is an operator space $\mathcal{X} \subseteq \mathcal{B}(H)$ with $I_{H} \in \mathcal{X}$ and $\mathcal{X}^{*} = \mathcal{X}$. For a space $\mathcal{X}$ of operators inside $\mathcal{B}(H)$, we will write $\mathcal{X}'$ for their commutant, i.e.,
\begin{gather*}
    \mathcal{X}' := \{a \in \mathcal{B}(H): \; ab = ba \; {\rm for \; all \;} b \in \mathcal{X}\}.
\end{gather*}

Let $\mathcal{X}, \mathcal{Y}$ be operator spaces. For a linear map $\Phi: \cl{X}\rightarrow \cl{Y}$ and $n \in \mathbb{N}$, the $n^{\rm th}$ \textit{amplification} $\Phi^{(n)}: M_{n}(\cl{X})\rightarrow M_{n}(\cl{Y})$ is defined by
\begin{gather*}
    \Phi^{(n)}((x_{ij})_{i, j}) := (\Phi(x_{ij}))_{i, j}.
\end{gather*}
\noindent We say that $\Phi$ is \textit{completely bounded} if
\begin{gather*}
    \|\Phi\|_{\rm cb} := \sup\limits_{n \in \mathbb{N}}\|\Phi^{(n)}\| < \infty.
\end{gather*}
\noindent It is called \textit{completely contractive} if $\|\Phi\|_{\rm cb} \leq 1$, and \textit{completely isometric} if $\Phi^{(n)}$ is an isometry for every $n \in \mathbb{N}$. 

Now, let $\mathcal{X}, \mathcal{Y}$ be operator systems. We say that $\Phi: \mathcal{X}\rightarrow \mathcal{Y}$ is \textit{unital} if $\Phi(1_{\cl{X}}) = 1_{\cl{Y}}$, and \textit{positive} if $\Phi(\mathcal{X}_{+}) \subseteq \mathcal{Y}_{+}$. The map $\Phi$ is called \textit{completely positive} if $\Phi^{(n)}$ is positive for each $n \in \mathbb{N}$. We say that $\Phi$ is a \textit{complete order isomorphism} if it is a completely positive map that is also a bijection and its inverse is completely positive. We write CP (respectively, UCP) for completely positive (respectively, unital completely positive) maps $\Phi: \cl{X}\rightarrow \cl{Y}$. A \textit{state} on $\mathcal{X}$ is a positive linear functional $\phi: \cl{X}\rightarrow \mathbb{C}$; it is known that every state on $\mathcal{X}$ is automatically completely positive, with norm $1$ (see \cite[Proposition 2.11, Chapter 3]{paulsen}). We let $S(\mathcal{X})$ denote the set of all states on $\mathcal{X}$; by the previous remarks and the Banach-Alaoglu Theorem, the state space $S(\mathcal{X})$ becomes a compact Hausdorff space when endowed with the ${\rm weak}^{*}$-topology. Similarly, for $n \in \mathbb{N}$ we let $S_{n}(\cl{X})$ denote the set of UCP maps $\Phi: \cl{X}\rightarrow M_{n}$.

We write $\mathcal{X}_{h}$ for the set of all self-adjoint elements in $\mathcal{X}$; that is, $x \in \mathcal{X}_{h}$ if $x^{*} = x$. Additionally, we write $\mathcal{X}_{+}$ for the positive elements in $\mathcal{X}$ (where positivity is inherited from the ambient ${\rm C}^{*}$-algebra). Note that every $x \in \mathcal{X}_{h}$ may be written as a difference of elements in $\mathcal{X}_{+}$, using the decomposition
\begin{gather*}
    x = \frac{1}{2}(\|x\|1_{\cl{X}}+x)-\frac{1}{2}(\|x\|1_{\cl{X}}-x),
\end{gather*}
\noindent which can be used to show that any positive map $\Phi: \cl{X}\rightarrow \cl{Y}$ automatically satisfies $\Phi(x^{*}) = \Phi(x)^{*}$. For $x \in \cl{X}$, we let
\begin{gather*}
    {\rm Re}(x) := \frac{x+x^{*}}{2}, \;\;\;\; {\rm Im}(x) := \frac{x-x^{*}}{2i}, \;\;\;\; {\rm Re}(x), \; {\rm Im}(x) \in \cl{X}_{h}
\end{gather*}
\noindent with $x = {\rm Re}(x)+i{\rm Im}(x)$.

In this work, we will take the operator system approach to compact quantum metric spaces; we follow the exposition in \cite{kk}, and refer the interested reader to \cite{ConnesSuij21, Leimbach25, Li03, Li06} for similar approaches. A seminorm $L: \cl{X}\rightarrow [0, \infty]$ is called a \textit{Lip-norm} if
\begin{itemize}
    \item[(i)] $L$ is \textit{densely-defined}, in that ${\rm dom}(L) := \{x \in \cl{X}: \; L(x) < \infty\}$ is norm-dense in $\cl{X}$;
    \item[(ii)] $\ker(L) = \mathbb{C}1_{\cl{X}}$;
    \item[(iii)] $L(x^{*}) = L(x)$ for all $x \in \cl{X}$;
    \item[(iv)] the topology on $S(\cl{X})$ induced from the metric
    \begin{gather*}
        d^{L}(\mu, \nu) := \sup\{|\mu(x)-\nu(x)|: \; L(x) \leq 1\}
    \end{gather*}
    \noindent is the ${\rm weak}^{*}$-topology. 
\end{itemize}
\noindent A \textit{compact quantum metric space} is a pair $(\mathcal{X}, L)$ consisting of an operator system $\mathcal{X} \subseteq \mathcal{B}(H)$ and a Lip-norm $L$ on $\mathcal{X}$. If $\cl{X}$ is an operator system endowed with a seminorm $L: \cl{X}\rightarrow [0, \infty]$ satisfying (i)-(iii) above, we let
\begin{gather*}
    d^{L, n}(\phi, \psi) := \sup\limits_{x \in \cl{X}/\mathbb{C}1_{\cl{X}}}\frac{\|\phi(x)-\psi(x)\|}{L(x)}
\end{gather*}
\noindent denote the induced distance on $S_{n}(\cl{X})$. 
\begin{remark}
\rm While the original definitions \cite[Definition 2.1.1, 2.1.2]{kk} allow for greater flexibility in what is considered a Lip-norm by allowing $\mathbb{C}1_{\cl{X}} \subseteq \ker(L)$ alone, the remarks surrounding the aforementioned definitions show that if $\ker(L) = \mathbb{C}1_{\cl{X}}$ precisely then $d^{L}$ on $S(\cl{X})$ as given above is a well-defined, finite metric. We may without loss of generality work with the definition of a Lip-norm given above. 
\end{remark}

\begin{remark}\label{rem:aou_vs_op_system}
\rm Technically, a compact quantum metric space was defined only for real partially ordered vector spaces, as first introduced in \cite{rieffel_two}. As is well-known in the literature, we may relate the two in a natural way. If we consider a norm-dense real subspace $V \subseteq \mathcal{X}_{h}$ (which is itself a real subspace) with $1_{\cl{X}} \in V$, a seminorm $L^{0}: V\rightarrow [0, \infty)$ such that $L^{0}(1_{\cl{X}}) = 0$ is called an \textit{order-unit Lip-norm} if $d^{L^{0}}$ metrizes the ${\rm weak}^{*}$-topology on $S(\cl{X})$, with $(V, L^{0})$ an \textit{order-unit compact quantum metric space}. This can be lifted to a Lip-norm $L_{\rm os}^{0}$ on $\cl{X}$ by defining
\begin{gather*}
    L_{\rm os}^{0}(x) := \begin{cases}
        \sup\limits_{\theta \in [0, 2\pi]}L^{0}(\cos(\theta){\rm Re}(x)+\sin(\theta){\rm Im}(x), \;\;\;\; {\rm Re}(x), {\rm Im}(x) \in {\rm dom}(L^{0}),
        \\ \infty, \;\;\;\;\;\;\;\;\;\;\;\;\;\;\;\;\;\;\;\;\;\;\;\;\;\;\;\;\;\;\;\;\;\;\;\;\;\;\;\;\;\;\;\;\;\;\;\;\;\;\;\;\;\;\;\;\; {\rm otherwise}.
    \end{cases}
\end{gather*}
\noindent Conversely, if $L: \cl{X}\rightarrow [0, \infty]$ is a Lip-norm on operator system $\cl{X}$, we can define an order-unit Lip-norm $L_{h}: {\rm dom}(L_{h})\rightarrow [0, \infty)$ by restricting $L$ to ${\rm dom}(L_{h}) := {\rm dom}(L) \cap L_{h}$. We note that $(L_{\rm os}^{0})_{h} = L^{0}$, making the two approaches compatible. That these yield appropriate compact quantum metric spaces is the statement of \cite[Proposition 2.1.8]{kk}. 
\end{remark}

For a compact quantum metric space $(\mathcal{X}, L)$, let 
\begin{gather*}
    B_{1}^{L}(0) := \{x \in \mathcal{X}: \; L(x) \leq 1\},
    \\ \mathcal{L}_{1}^{L} := \{x \in \mathcal{X}: \; x \in B_{1}^{L}(0), \; \|x\| \leq1\},
\end{gather*}
\noindent where $\|x\|$ in the latter definition denotes the norm as an element $x \in \mathcal{X}$. As $\cl{X}$ is a normed space, let $q: \cl{X}\rightarrow \cl{X}/\mathbb{C}$ denote the canonical quotient map to the normed quotient space $\cl{X}/\mathbb{C}$. 
For a seminorm $L$ on an operator system $\cl{X}$, an equivalent way to check that $L$ is a Lip-norm is by verifying (i)-(iii) above, in addition to the following:
\begin{itemize}
    \item[(iv')] the set $q(B_{1}^{L}(0))$, as defined above, is totally bounded in $\cl{X}/\mathbb{C}$ with respect to the quotient norm $\|\cdot\|_{\cl{X}/\mathbb{C}}$ on $\cl{X}/\mathbb{C}$.
\end{itemize}
\noindent This follows from \cite[Theorem 1.8]{rieffel} (and is in fact and if and only if statement); we rely on this characterization extensively. 

Given compact quantum metric spaces $(\cl{X}, L)$ and $(\hat{\cl{X}}, \hat{L})$, a seminorm $M: \cl{X}\oplus \hat{\cl{X}}\rightarrow [0, \infty]$ is said to be \textit{admissible} if $M$ is a Lip-norm on $\cl{X}\oplus \hat{\cl{X}}$, ${\rm dom}(M) = {\rm dom}(L)\oplus {\rm dom}(\hat{L})$ and the quotient seminorms induced by $M_{h}$ via the coordinate projections
\begin{gather*}
    {\rm dom}(M)_{h}\rightarrow {\rm dom}(L)_{h}, \;\;\;\; {\rm dom}(M)_{h}\rightarrow {\rm dom}(\hat{L})_{h},
\end{gather*}
\noindent agree with $L_{h}$ and $\hat{L}_{h}$, respectively. We let $\cl{M}(L, \hat{L})$ denote the collection of all admissible Lip-norms corresponding to $(\cl{X}, L)$ and $(\hat{\cl{X}}, \hat{L})$. When $M$ is an admissible Lip-norm, it follows that coordinate projections $\cl{X}\oplus \hat{\cl{X}}\rightarrow \cl{X}$ and $\cl{X}\oplus \hat{\cl{X}}\rightarrow \hat{\cl{X}}$ induce isometric inclusions $S(\cl{X}), S(\hat{\cl{X}}) \hookrightarrow S(\cl{X}\oplus \hat{\cl{X}})$, and allows us to measure the Hausdorff distance between $S(\cl{X})$ and $S(\hat{\cl{X}})$ with respect to the induced metric $d^{M}$ in $S(\cl{X}\oplus \hat{\cl{X}})$. Then the \textit{quantum Gromov-Hausdorff distance} between $(\cl{X}, L)$ and $(\hat{\cl{X}}, \hat{L})$ is given by
\begin{gather*}
    {\rm d}_{q}((\cl{X}, L), (\hat{\cl{X}}, \hat{L})) := \inf\{{\rm d}^{M}_{H}(S(\cl{X}), S(\hat{\cl{X}})): \; M \in \cl{M}(L, \hat{L})\},
\end{gather*}
\noindent where ${\rm d}_{H}^{M}$ denotes the Hausdorff distance between state spaces $S(\cl{X}), S(\hat{\cl{X}})$ inside $S(\cl{X}\oplus \hat{\cl{X}})$ with respect to metric $d^{M}$. 

Similarly, following \cite{Kerr03} (see also \cite{KerrLi09}, \cite{Leimbach25}, \cite{Li06}) for $n \in \mathbb{N}$ we set
\begin{gather*}
    {\rm d}_{n}^{\rm s}((\cl{X}, L), (\hat{\cl{X}}, \hat{L})) := \inf {\rm d}_{H}^{d^{L, n}}(S_{n}(\cl{X}), S_{n}(\hat{\cl{X}}))
\end{gather*}
\noindent where the infimum is taken over all admissible Lip-norms $L$ on $\cl{X}\oplus \hat{\cl{X}}$. We may then define the \textit{complete Gromov-Hausdorff distance} between $(\cl{X}, L)$ and $(\hat{\cl{X}}, \hat{L})$ as
\begin{gather*}
    {\rm d}^{\rm s}((\cl{X}, L), (\hat{\cl{X}}, \hat{L})) := \inf\sup\limits_{n \in \mathbb{N}}{\rm d}_{H}^{d^{L, n}}(S_{n}(\cl{X}), S_{n}(\hat{\cl{X}})),
\end{gather*}
\noindent where the infimum is taken over all admissible Lip-norms $L$ on $\cl{X}\oplus \hat{\cl{X}}$.

\begin{remark}\label{r_qgh_dominates_gh}
\rm Let $(\cl{X}, L)$ and $(\hat{\cl{X}}, \hat{L})$ be compact quantum metric spaces. Then 
\begin{gather*}
    {\rm d}_{\rm GH}((S(\cl{X}), d^{L}), (S(\hat{\cl{X}}), d^{\hat{L}})) \leq {\rm d}_{\rm q}((\cl{X}, L), (\hat{\cl{X}}, \hat{L})).
\end{gather*}
\noindent This can be seen directly from the discussion surrounding (2.1) of \cite{kk_two}. Additionally, as discussed surrounding \cite[Definition 3.2]{Kerr03} and noting that ${\rm d}_{\rm q}((\cl{X}, L), (\hat{\cl{X}}, \hat{L})) = {\rm d}_{1}^{\rm s}((\cl{X}, L), (\hat{\cl{X}}, \hat{L}))$ we have that
\begin{gather*}
    {\rm d}_{\rm q}((\cl{X}, L), (\hat{\cl{X}}, \hat{L}))  \leq {\rm d}_{m}^{\rm s}((\cl{X}, L), (\hat{\cl{X}}, \hat{L})) \leq {\rm d}_{n}^{\rm s}((\cl{X}, L), (\hat{\cl{X}}, \hat{L})) \leq {\rm d}^{s}((\cl{X}, L), (\hat{\cl{X}}, \hat{L})),
\end{gather*}
\noindent for $m, n \in \mathbb{N}$ with $m \leq n$. 
\end{remark}

For compact quantum metric spaces $(\cl{X}, L)$ and $(\hat{\cl{X}}, \hat{L})$, a \textit{morphism} between them is a UCP map $\Phi: \cl{X}\rightarrow \hat{\cl{X}}$ for which there exists a constant $C \geq 0$ such that $\hat{L}(\Phi(x)) \leq CL(x)$ for all $x \in \cl{X}$. We call the morphism \textit{$C^{1}$-contractive} if $\hat{L}(\Phi(x)) \leq L(x)$ for all $x \in \cl{X}$.

Following the example of \cite{LeimSuij24, Suij21}, we use the following terminology for approximating maps between compact quantum metric spaces. 

\begin{definition}
Let $\{(\cl{X}_{\Lambda}, L_{\Lambda})\}_{\Lambda \in I}$ be a net of compact quantum metric spaces, and let $(\cl{X}, L)$ be another compact quantum metric space. An {\rm approximate order isomorphism} for these spaces are pairs of linear maps $\Phi_{\Lambda}: \cl{X}\rightarrow \cl{X}_{\Lambda}$ and $\Psi_{\Lambda}: \cl{X}_{\Lambda}\rightarrow \cl{X}$ for any $\Lambda \in I$ such that 
\begin{itemize}
    \item[(i)] the maps $\Phi_{\Lambda}, \Psi_{\Lambda}$ are compact quantum metric space morphisms;
    \item[(ii)] there exist nets $(\gamma_{\Lambda})_{\Lambda \in I}$ and $(\gamma_{\Lambda}')_{\Lambda \in I}$ with $\gamma_{\Lambda}, \gamma_{\Lambda}' > 0$ converging to zero such that
    \begin{gather*}
       \|\Psi_{\Lambda}\circ \Phi_{\Lambda}(x)-x\| \leq \gamma_{\Lambda}\cdot L(x),
       \\ \|\Phi_{\Lambda}\circ \Psi_{\Lambda}(y)-y\| \leq \gamma_{\Lambda}'\cdot L_{\Lambda}(y),
    \end{gather*}
    \noindent for all $x \in \cl{X}, y \in \cl{X}_{\Lambda}$. 
\end{itemize}
\end{definition}
\noindent An approximate order isomorphism $((\Phi_{\Lambda}, \Psi_{\Lambda}))_{\Lambda \in I}$ will be called a \textit{$C^{1}$-approximate order isomorphism} if maps $\Phi_{\Lambda}, \Psi_{\Lambda}$ are $C^{1}$-contractive for all $\Lambda \in I$. The results in \cite[Proposition 5.19]{Leimbach25} show that $C^{1}$-approximate order isomorphisms are the right morphisms to use when approximating a compact quantum metric space in the complete Gromov-Hausdorff distance. We record this fact below, along with a consequence.
\begin{proposition}\label{prop_qgh_distance_appoi}
Assume $(\cl{X}_{\Lambda}, L_{\Lambda})$ and $(\cl{X}, L)$ are compact quantum metric spaces. If $((\Phi_{\Lambda}, \Psi_{\Lambda}))_{\Lambda \in I}$ is a $C^{1}$-approximate order isomorphism for $(\cl{X}_{\Lambda}, L_{\Lambda})$ and $(\cl{X}, L)$, then $(\cl{X}_{\Lambda}, L_{\Lambda})$ converges to $(\cl{X}, L)$ in both the complete and the quantum Gromov-Hausdorff distance.  
\end{proposition}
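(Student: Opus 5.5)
The plan is to prove convergence in the complete distance ${\rm d}^{\rm s}$ and then read off convergence in ${\rm d}_{\rm q}$ from the chain of inequalities in Remark \ref{r_qgh_dominates_gh}. Indeed ${\rm d}_{\rm q} = {\rm d}_1^{\rm s} \leq {\rm d}^{\rm s}$, so it suffices to bound ${\rm d}^{\rm s}((\cl{X}_\Lambda, L_\Lambda),(\cl{X},L))$ by a quantity that tends to zero. Following \cite[Proposition 5.19]{Leimbach25}, I would produce, for each $\Lambda$, an admissible Lip-norm on $\cl{X}\oplus\cl{X}_\Lambda$ built from the maps $\Phi_\Lambda,\Psi_\Lambda$. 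A natural choice is
\begin{gather*}
M_\Lambda(x,y) := \max\Big\{L(x),\, L_\Lambda(y),\, \tfrac{1}{\varepsilon_\Lambda}\|\Phi_\Lambda(x)-y\|,\, \tfrac{1}{\varepsilon_\Lambda}\|\Psi_\Lambda(y)-x\|\Big\},
\end{gather*}
with $\varepsilon_\Lambda := \max\{\gamma_\Lambda,\gamma_\Lambda'\}$. This is the standard bridge construction.

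The key steps are as follows. First, check that $M_\Lambda$ is a Lip-norm. It is densely defined on ${\rm dom}(L)\oplus{\rm dom}(L_\Lambda)$, it is $*$-invariant because UCP maps preserve adjoints, and its kernel is $\mathbb{C}(1,1)$ because $\Phi_\Lambda$ and $\Psi_\Lambda$ are unital. Total boundedness (iv') follows because $M_\Lambda$ dominates $\max\{L,L_\Lambda\}$.

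Second, check admissibility. Given $x$, set $y=\Phi_\Lambda(x)$. By $C^1$-contractivity, $L_\Lambda(y)\leq L(x)$, and
\begin{gather*}
\|\Psi_\Lambda(y)-x\| = \|\Psi_\Lambda\Phi_\Lambda(x)-x\| \leq \gamma_\Lambda L(x),
\end{gather*}
so $M_\Lambda(x,y)=L(x)$. The symmetric argument with $x=\Psi_\Lambda(y)$ handles the other coordinate. On self-adjoint elements the quotient seminorms therefore agree with $L_h$ and $(L_\Lambda)_h$.

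Third, bound the Hausdorff distance at matrix level $n$. Given $\phi\in S_n(\cl{X}_\Lambda)$, take $\phi\circ\Phi_\Lambda \in S_n(\cl{X})$. For $M_\Lambda(x,y)\leq 1$ we have
\begin{gather*}
\|\phi(y)-\phi(\Phi_\Lambda(x))\| \leq \|y-\Phi_\Lambda(x)\| \leq \varepsilon_\Lambda,
\end{gather*}
since UCP maps into $M_n$ are contractive. Symmetrically, $\psi\circ\Psi_\Lambda$ lies within $\varepsilon_\Lambda$ of $\psi \in S_n(\cl{X})$. This bound is uniform in $n$, so ${\rm d}^{\rm s}\leq\varepsilon_\Lambda\to 0$.

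The main obstacle is the admissibility step, in the form the paper defines it. The definition requires the domain of $M_\Lambda$ to be exactly ${\rm dom}(L)\oplus{\rm dom}(L_\Lambda)$, and it requires the quotient seminorms to match $L_h$ and $(L_\Lambda)_h$ on self-adjoint parts. The penalty terms $\|\Phi_\Lambda(x)-y\|$ and $\|\Psi_\Lambda(y)-x\|$ are finite everywhere, so the domain condition holds. The self-adjoint matching holds because $\Phi_\Lambda(x)$ is self-adjoint whenever $x$ is. Rather than re-verify every detail, I would at this point simply invoke \cite[Proposition 5.19]{Leimbach25} for the complete distance and conclude via Remark \ref{r_qgh_dominates_gh}.
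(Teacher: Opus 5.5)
Your proposal is correct and follows the paper's route: the paper likewise invokes \cite[Proposition 5.19]{Leimbach25} with $C_{\Phi}=C_{\Psi}=1$ to get ${\rm d}^{\rm s}\leq\max\{\gamma_\Lambda,\gamma_\Lambda'\}$ and then passes to ${\rm d}_{\rm q}$ via Remark~\ref{r_qgh_dominates_gh}, and your bridge seminorm $M_\Lambda$ is a faithful unpacking of that cited proof. One justification in your unpacking is wrong, though: (iv') does not follow from $M_\Lambda\geq\max\{L,L_\Lambda\}$, because $\max\{L,L_\Lambda\}$ vanishes on $(\lambda 1,0)$, whose quotient norm modulo $\mathbb{C}(1,1)$ is $|\lambda|/2$, so total boundedness actually comes from the bridge term $\|\Phi_\Lambda(x)-y\|\leq\varepsilon_\Lambda$, which together with contractivity of $\Phi_\Lambda$ ties the scalar parts of $x$ and $y$ together.
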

\begin{proof}
As $\Phi_{\Lambda}, \Psi_{\Lambda}$ are $C^{1}$-contractive, we have that $C_{\Phi} = C_{\Psi} = 1$ in the proof of \cite[Proposition 5.19]{Leimbach25}. Thus, in the same proof we have that ${\rm d}^{\rm s}((\cl{X}, L), (\cl{X}_{\Lambda}, L_{\Lambda})) \leq \max\{\gamma_{\Lambda}, \gamma_{\Lambda}'\}$, with both going to zero along the net. By Remark \ref{r_qgh_dominates_gh}, this shows convergence in ${\rm d}_{\rm q}$ as well. 
\end{proof}

\begin{corollary}\label{cor_qgh_implies_gh}
Let $(\cl{X}_{\Lambda}, L_{\Lambda})$ and $(\cl{X}, \Lambda)$ be compact quantum metric spaces. If $((\Phi_{\Lambda}, \Psi_{\Lambda}))_{\Lambda \in I}$ is a $C^{1}$-approximate order isomorphism for $(\cl{X}_{\Lambda}, L_{\Lambda})$ and $(\cl{X}, L)$, then $(S(\cl{X}_{\Lambda}), d^{L_{\Lambda}})$ converges to $(S(\cl{X}), d^{L})$ in the Gromov-Hausdorff distance.
\end{corollary}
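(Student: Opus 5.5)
The corollary follows by chaining Proposition \ref{prop_qgh_distance_appoi} with the first inequality of Remark \ref{r_qgh_dominates_gh}.

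First, since $((\Phi_{\Lambda}, \Psi_{\Lambda}))_{\Lambda \in I}$ is a $C^{1}$-approximate order isomorphism, Proposition \ref{prop_qgh_distance_appoi} gives ${\rm d}_{\rm q}((\cl{X}_{\Lambda}, L_{\Lambda}), (\cl{X}, L)) \to 0$ along the net. More quantitatively, its proof yields the bound
\begin{gather*}
{\rm d}_{\rm q}((\cl{X}_{\Lambda}, L_{\Lambda}), (\cl{X}, L)) \leq {\rm d}^{\rm s}((\cl{X}_{\Lambda}, L_{\Lambda}), (\cl{X}, L)) \leq \max\{\gamma_{\Lambda}, \gamma_{\Lambda}'\}.
\end{gather*}

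Second, Remark \ref{r_qgh_dominates_gh} states that the classical Gromov--Hausdorff distance between the state spaces, with their induced metrics $d^{L_{\Lambda}}$ and $d^{L}$, is dominated by ${\rm d}_{\rm q}$. Combining the two estimates gives
\begin{gather*}
{\rm d}_{\rm GH}((S(\cl{X}_{\Lambda}), d^{L_{\Lambda}}), (S(\cl{X}), d^{L})) \leq \max\{\gamma_{\Lambda}, \gamma_{\Lambda}'\} \longrightarrow 0,
\end{gather*}
which is the claimed convergence.

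Both $(S(\cl{X}_{\Lambda}), d^{L_{\Lambda}})$ and $(S(\cl{X}), d^{L})$ are compact metric spaces, because $L_{\Lambda}$ and $L$ are Lip-norms: by condition (iv) in the definition of a Lip-norm, each metric induces the weak$^{*}$ topology, under which the state space is compact. So ${\rm d}_{\rm GH}$ is meaningful here, and nothing further needs to be checked.

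The only step that genuinely needs justification is the inequality ${\rm d}_{\rm GH} \leq {\rm d}_{\rm q}$, which the paper takes from \cite{kk_two}. If one wanted to verify it directly, the route is as follows. Given an admissible $M \in \cl{M}(L_{\Lambda}, L)$, the coordinate projections induce isometric embeddings of $(S(\cl{X}_{\Lambda}), d^{L_{\Lambda}})$ and $(S(\cl{X}), d^{L})$ into the metric space $(S(\cl{X}_{\Lambda}\oplus\cl{X}), d^{M})$. By the definition of ${\rm d}_{\rm GH}$ as an infimum over isometric embeddings into a common metric space, ${\rm d}_{\rm GH} \leq {\rm d}_{H}^{M}$. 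Taking the infimum over $M$ then gives the inequality.

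The only subtle point in that direct verification is the isometry of the embeddings. This is exactly the role of admissibility: the quotient seminorms of $M_{h}$ agree with $L_{h}$ and $\hat{L}_{h}$, and one applies the duality argument of Rieffel to the self-adjoint parts. Since this is already recorded in Remark \ref{r_qgh_dominates_gh}, I would simply cite it.
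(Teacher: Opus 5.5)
Your proposal is correct and takes the same route as the paper: Proposition \ref{prop_qgh_distance_appoi} gives ${\rm d}_{\rm q}\to 0$ (indeed ${\rm d}_{\rm q}\le {\rm d}^{\rm s}\le\max\{\gamma_{\Lambda},\gamma_{\Lambda}'\}$), and the inequality ${\rm d}_{\rm GH}\le {\rm d}_{\rm q}$ from Remark \ref{r_qgh_dominates_gh} transfers this to the state spaces. Your sketch of why ${\rm d}_{\rm GH}\le{\rm d}_{\rm q}$ holds, via the isometric embeddings supplied by admissibility, is a sound supplement, though the paper simply cites it.
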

\begin{proof}
This follows directly from Remark \ref{r_qgh_dominates_gh}.
\end{proof}

\subsection{Hopf algebras, compact quantum groups, and actions}\label{ss:hopf_alg_cqg}
In what follows, $\odot$ denotes the algebraic tensor product of vector spaces (often with additional algebraic structure), $\otimes$ denotes the minimal tensor product of ${\rm C}^{*}$-algebras, and $\overline{\otimes}$ denotes the von Neumann algebraic tensor product. In the setting of operator systems, $\otimes_{\rm min}$ denotes the minimal operator system tensor product, which is obtained by taking the closure of $\cl{X}\odot \cl{Y}$ inside $\mathcal{B}(H\otimes K)$ for concrete representations $\cl{X}\subseteq \cl{B}(H), \cl{Y}\subseteq \cl{B}(K)$. We refer the reader to \cite{deCommer17, Timmermann08} for a more detailed exposition.
\smallskip

A  \emph{Hopf $*$-algebra} is a quadruple $(\cl A,\Delta,\varepsilon,S)$ consisting of a unital $*$-algebra $\cl A$, a unital $*$-homomorphism $\Delta:\cl A\to \cl A\otimes \cl A$  such that $(\mathrm{id}\otimes \Delta)\Delta=(\Delta\otimes \mathrm{id})\Delta$ and  morphisms  $\varepsilon:\cl A\to\bb C$ (the \emph{counit}) and $S:\cl A\to \cl A$ (the \emph{antipode}) such that, for all $a\in \cl A$,
\begin{itemize}
  \item[(i)] $(\varepsilon\otimes \mathrm{id})\Delta(a)=(\mathrm{id}\otimes \varepsilon)\Delta(a)=a$;
  \item[(ii)] $m(S\otimes \mathrm{id})\Delta(a)=m(\mathrm{id}\otimes S)\Delta(a)=\varepsilon(a)\,1$,
\end{itemize}
where $m:\cl A\otimes \cl A\to \cl A$ is the multiplication map. If $\cl{A}, \cl{B}$ are unital $*$-algebras (not necessarily Hopf $*$-algebras), and $\phi: \cl{A}\rightarrow \mathbb{C}, \psi: \cl{B}\rightarrow \mathbb{C}$ are linear maps, we can define \textit{slice maps}
\begin{gather*}
    \phi \odot {\rm id}_{\cl{B}}: \cl{A}\odot \cl{B}\rightarrow \cl{B}, \;\;\;\; a\odot b \mapsto \phi(a)b,
    \\ {\rm id}_{\cl{A}}\odot \psi: \cl{A}\odot \cl{B}\rightarrow \cl{A}, \;\;\;\; a\odot b \mapsto \psi(b)a,
\end{gather*}
\noindent for which 
\begin{gather*}
    (\psi\circ (\phi\odot {\rm id}_{\cl{B}}))(x) = (\phi\circ ({\rm id}_{\cl{A}}\odot \psi))(x) = (\phi\otimes \psi)(x),\\
    b((\phi\odot {\rm id}_{\cl{B}})(x))b' = (\phi\odot {\rm id}_{\cl{B}})((1_{\cl{A}}\odot b)x(1_{\cl{A}}\odot b')),\\
    a(({\rm id}_{\cl{A}}\odot \psi)(x))a' = ({\rm id}_{\cl{A}}\odot \psi)((a\odot 1_{\cl{B}})x(a'\odot 1_{\cl{B}})),
\end{gather*}
\noindent holds for all $a, a' \in \cl{A}, b, b' \in \cl{B}$ and $x \in \cl{A}\odot \cl{B}$. Additionally, if $\phi, \psi$ are $*$-linear, so is $\phi\odot \psi$. If we also assume $\mathcal{A}, \mathcal{B}$ are ${\rm C}^{*}$-algebras (and not only unital $*$-algebras), when $\phi, \psi$ are continuous the maps $\phi\odot {\rm id}_{\cl{B}}$ and ${\rm id}_{\cl{A}}\odot \psi$ extend uniquely to norm-continuous maps $\phi\otimes {\rm id}_{\cl{B}}$ and ${\rm id}_{\cl{A}}\otimes \psi$ on the minimal tensor product $\cl{A}\otimes \cl{B}$ (see \cite[Chapter 12]{Timmermann08}). An analogous statement holds for operator systems: if $\cl{X}, \cl{Y}$ are operator systems with $\phi: \cl{X}\rightarrow \mathbb{C}, \psi: \cl{Y}\rightarrow \mathbb{C}$ completely positive linear functionals, the map $\phi\odot \psi: \cl{X}\odot \cl{Y}\rightarrow \mathbb{C}$ extends uniquely to a completely positive map $\phi\otimes \psi: \cl{X}\otimes_{\rm min} \cl{Y}\rightarrow \mathbb{C}$ for which
\begin{gather*}
    \phi\circ({\rm id}_{\cl{X}}\otimes \psi) = \psi\circ(\phi\otimes {\rm id}_{\cl{Y}}) = \phi\otimes \psi.
\end{gather*}
\noindent For a proof, see \cite[Lemma 2.3]{Leimbach25} (see also \cite[Theorem 12.3]{paulsen}). 
\smallskip

A \textit{compact quantum group (CQG)} $\bb{G}$ is a pair $(C(\bb{G}), \Delta)$, where $C(\bb{G})$ is a unital ${\rm C}^{*}$-algebra and $\Delta: C(\bb{G})\rightarrow C(\bb{G})\otimes C(\bb{G})$ is a unital $*$-homomorphism such that:
\begin{itemize}
  \item[(i)] $(\mathrm{id}\otimes \Delta)\Delta = (\Delta\otimes \mathrm{id})\Delta$;
  \item[(ii)] we have the following density conditions:
  \[
    \overline{\mathrm{span}}\bigl\{ \Delta(C(\bb{G}))(1\otimes C(\bb{G})) \bigr\}
    \;=\; C(\bb{G})\otimes C(\bb{G})
    \;=\;
    \overline{\mathrm{span}}\bigl\{ (C(\bb{G})\otimes 1)\Delta(C(\bb{G})) \bigr\}.
  \]
\end{itemize}

Every compact quantum group admits a unique state $\varphi_{\bb{G}}: C(\bb{G})\rightarrow \bb{C}$, called the \textit{Haar state}, such that
\[
(\mathrm{id}\otimes \varphi_{\bb{G}})\Delta(a) = \varphi_{\bb{G}}(a)\,1
\;=\;
(\varphi_{\bb{G}}\otimes \mathrm{id})\Delta(a)
\qquad (a \in C(\bb{G})).
\]
If the Haar state is tracial, then $\bb{G}$ is said to be of \textit{Kac type}. If the Haar state $\varphi_{\bb{G}}$ is faithful, then $\mathbb{G}$ is called \textit{reduced}. 

Given two compact quantum groups $\bb{G} = (C(\bb{G}), \Delta_{\bb{G}})$ and $\bb{H} = (C(\bb{H}), \Delta_{\bb{H}})$, a unital $*$-homomorphism $\varphi: C(\bb{G})\rightarrow C(\bb{H})$ is a \emph{morphism of compact quantum groups} if
\[
(\varphi\otimes \varphi)\circ \Delta_{\bb{G}} \;=\; \Delta_{\bb{H}}\circ \varphi.
\]
If $\varphi$ is surjective, we write $\bb H \subseteq \bb G$ and call $\bb H$ a subgroup of $\bb G$; if $\varphi$ is injective, we call $\bb H$ a quotient of $\bb G$; and if $\varphi$ is bijective, then $\bb{G}$ and $\bb{H}$ are \emph{isomorphic as compact quantum groups}.

A \textit{unitary finite-dimensional representation} $\pi$ of $\mathbb{G}$ consists of a pair $(H_{\pi}, U_{\pi})$ where $H_{\pi}$ is a finite-dimensional Hilbert space, and $U_{\pi} \in \mathcal{B}(H_{\pi})\otimes C(\bb{G})$ is a unitary operator such that $({\rm id}_{\pi}\otimes \Delta)(U_{\pi}) = U_{\pi, 12}U_{\pi, 13}$ in $\mathcal{B}(H_{\pi})\otimes C(\bb{G})\otimes C(\bb{G})$. Given $\xi, \eta \in H_{\pi}$, we define the matrix coefficient $U_{\pi}(\xi, \eta) := (\omega_{\xi, \eta}\otimes {\rm id}_{C(\bb{G})})(U_{\pi})$, where $\omega_{\xi, \eta}: \mathcal{B}(H_{\pi})\rightarrow \mathbb{C}$ is the vector functional given by $\omega_{\xi, \eta}(x) := \langle \xi, x\eta\rangle$ for all $x \in \mathcal{B}(H_{\pi})$. If $n_{\pi}$ denotes the dimension of $H_{\pi}$ for a fixed unitary finite-dimensional representation $\pi$, and if $\{e_{i}^{\pi}\}_{i=1}^{n_{\pi}}$ is an orthonormal basis for $H_{\pi}$, then
\begin{gather}\label{eqn:comult_on_matrix_coeff}
    \Delta(U_{\pi}(\xi, \eta)) = \sum\limits_{i=1}^{n_{\pi}}U_{\pi}(\xi, e_{i}^{\pi})\otimes U_{\pi}(e_{i}^{\pi}, \eta).
\end{gather}
\noindent For two unitary finite-dimensional representations $\pi, \rho$, a linear map $T: H_{\pi}\rightarrow H_{\rho}$ is called an \textit{intertwiner} if $(T\otimes 1)U_{\pi} = U_{\rho}(T\otimes 1)$; we write ${\rm Mor}(\pi, \rho)$ for the set of all such intertwiners. A representation $\pi$ of $\bb{G}$ is said to be \textit{irreducible}, if ${\rm Mor}(\pi, \pi)$ is one-dimensional (i.e., if ${\rm Mor}(\pi, \pi) \cong \bb{C}I_{H_{\pi}}$). If $\pi, \rho$ are unitary finite-dimensional representations, we define $U_{\pi \boxtimes \rho} := U_{\pi, 13}U_{\rho, 23} \in \mathcal{B}(H_{\pi}\otimes H_{\rho})\otimes C(\bb{G})$ for the tensor product representation $\pi\boxtimes \rho$.

We fix, now and in the sequel, a maximal family ${\rm Irr}(\bb{G})$ of pairwise non-equivalent irreducible representations. We also set $\cl{O}(\bb{G})$ as the linear span of all matrix coefficients for $\bb{G}$. This is a norm-dense subalgebra of $C(\bb{G})$ on which $\varphi_{\bb{G}}$ is faithful, and $(\cl{O}(\bb{G}), \Delta, \varepsilon, S)$ forms a Hopf $*$-algebra with co-unit and antipode
\begin{gather*}
    \varepsilon(U_{\pi}(\xi, \eta)) = \langle \xi, \eta\rangle, \;\;\;\; S(U_{\pi}(\xi, \eta)) = U_{\pi}(\xi, \eta)^{*}.
\end{gather*}
\noindent One uses (\ref{eqn:comult_on_matrix_coeff}) to verify that $\Delta$ restricts to a well-defined co-multiplication on $\cl{O}(\bb{G})$.

We can use the $*$-algebra structure of $\cl{O}(\bb{G})$ to define a convolution product on $\cl{O}(\bb{G})^{*}$, the linear dual of $\cl{O}(\bb{G})$. Indeed: for $\omega, \rho \in \cl{O}(\bb{G})^{*}$ and any $x \in \cl{O}(\bb{G})$, define
\begin{equation}
    \begin{gathered}
        (\omega \ast \rho)(x) := (\omega \otimes \rho)\Delta(x), \;\;\;\; \omega^{*}(x) := \overline{\omega(S(x)^{*})}, 
    \\ \omega \ast x := ({\rm id}_{C(\mathbb{G})}\otimes \omega)\Delta(x), \;\;\;\; x \ast \omega := (\omega \otimes {\rm id}_{C(\mathbb{G})})\Delta(x).
    \end{gathered}\label{eqn:convolution_product_for_group}
\end{equation}
\noindent In this case, one may show (see \cite[Theorem 1.8]{deCommer17}) for each unitary finite-dimensional representation $\pi$ of $\mathbb{G}$ that there exists a unique invertible functional $f \in \cl{O}(\bb{G})^{*}$ such that:
\begin{itemize}
    \item[(i)] $Q_{\pi} = ({\rm id}_{H_{\pi}}\otimes f)U_{\pi}$ is positive;
    \item[(ii)] $f^{-1}\ast x \ast f = S^{2}(x)$ for all $x \in \cl{O}(\bb{G})$. 
\end{itemize}
\noindent The element $Q_{\pi}$ is an invertible element of $\mathcal{B}(H_{\pi})$, with 
\begin{gather*}
    U_{\pi}(\xi, \eta)^{*} = U_{\overline{\pi}}((Q_{\pi}^{-1}\xi)^{*}, \eta^{*}), \;\;\;\; S(U_{\pi}(\xi, \eta)) = U_{\overline{\pi}}((Q_{\pi}^{-1}\eta)^{*}, \xi^{*})
\end{gather*}
\noindent where $\overline{\pi}$ is the dual representation and $H_{\overline{\pi}} = H_{\pi}^{*}$ endowed with the inner product
\begin{gather*}
    \langle\langle \xi^{*}, \eta^{*}\rangle\rangle = \langle \eta, Q_{\pi}\xi\rangle. 
\end{gather*}

With respect to the Haar state $\varphi_{\bb{G}}$, we consider the GNS representation $(L^{2}(\bb{G}), \pi_{\bb{G}}, \xi_{\bb{G}})$ where $\pi_{\bb{G}}: C(\bb{G})\rightarrow \mathcal{B}(L^{2}(\bb{G}))$ is a $*$-representation and $\xi_{\bb{G}}\in L^{2}(\bb{G})$ is the cyclic vector such that $\varphi_{\bb{G}}(a) = \langle \xi_{\bb{G}}, \pi_{\bb{G}}(a)\xi_{\bb{G}}\rangle$ for all $a \in C(\bb{G})$. We also set $\Lambda: C(\bb{G})\rightarrow L^{2}(\bb{G})$ as the map sending $a \mapsto \pi_{\bb{G}}(a)\xi_{\bb{G}}$. Set $C_{r}(\mathbb{G}) := \pi_{\mathbb{G}}(C(\mathbb{G})) \subseteq \mathcal{B}(L^{2}(\mathbb{G}))$. Then $\Delta$ descends to a well-defined co-multiplication also denoted $\Delta$ on $C_{r}(\mathbb{G})$. The pair $(C_{r}(\bb{G}), \Delta)$ then defines what is called the \textit{reduced compact quantum group} $\bb{G}_{r}$. We have $\cl{O}(\bb{G}_{r}) = \pi_{\bb{G}}(\cl{O}(\bb{G}))$, and Haar state, co-unit and antipode satisfying 
\begin{gather*}
    \varphi_{\bb{G}_{r}}\circ \pi_{\bb{G}} = \varphi_{\bb{G}}, \;\;\;\; \varepsilon_{r}\circ \pi_{\bb{G}} = \varepsilon, \;\;\;\; S_{r}\circ \pi_{\bb{G}} = S.
\end{gather*}
\noindent One can show that for any $*$-representation of $\mathcal{O}(\mathbb{G})$ and any representation $\pi$ of $\mathbb{G}$, $\|\rho(U_{\pi}(\xi, \eta))\| \leq \|\xi\|\|\eta\|$; it follows that $\mathcal{O}(\mathbb{G})$ admits a universal ${\rm C}^{*}$-envelope, which we denote by $C_{u}(\mathbb{G})$. The ${\rm C}^{*}$-algebra $C_{u}(\mathbb{G})$ inherits the co-multiplication from $\mathcal{O}(\mathbb{G})$, making $\mathbb{G}_{u} := (C_{u}(\mathbb{G}), \Delta)$ a well-defined compact quantum group. Furthermore, by universal properties we have canonical surjective morphisms of compact quantum groups
\begin{gather*}
    C_{u}(\mathbb{G}) \twoheadrightarrow_{\pi_{u}} C(\mathbb{G})\twoheadrightarrow_{\pi_{\mathbb{G}}} C_{r}(\mathbb{G}).
\end{gather*}

\noindent There is a (canonical) associated von Neumann algebra $L^{\infty}(\bb{G}) := \pi_{\bb{G}}(\cl{O}(\bb{G}))'' \subseteq \cl{B}(L^{2}(\bb{G}))$ which carries the co-multiplication $\Delta: L^{\infty}(\mathbb{G})\rightarrow L^{\infty}(\mathbb{G})\overline{\otimes} L^{\infty}(\mathbb{G})$ extending the co-multiplication on $C_{r}(\bb{G})$. The vector state $\omega_{\xi_{\bb{G}}}: L^{\infty}(\bb{G})\rightarrow \bb{C}$ is faithful, and satisfies
\begin{gather*}
    (\omega_{\xi_{\bb{G}}}\otimes {\rm id}_{L^{\infty}(\bb{G})})\Delta = \omega_{\xi_{\bb{G}}}(\cdot)1 = ({\rm id}_{L^{\infty}(\bb{G})}\otimes \omega_{\xi_{\bb{G}}})\Delta.
\end{gather*}
\noindent Note that $C_{r}(\mathbb{G})$ is the norm-closure of $\pi_{\bb{G}}(\cl{O}(\bb{G}))$, while $L^{\infty}(\bb{G})$ is the weak-operator closure of $\pi_{\bb{G}}(\cl{O}(\bb{G}))$ inside $\cl{B}(L^{2}(\bb{G}))$. We set $L^{1}(\bb{G}) := L^{\infty}(\bb{G})_{\ast}$ as the pre-dual of the von Neumann algebra $L^{\infty}(\bb{G})$; this consists of all normal linear functionals $\omega: L^{\infty}(\bb{G})\rightarrow \bb{C}$.

A compact quantum group $\mathbb{G}$ is called \textit{co-amenable} if the co-unit $\varepsilon_{r}: \cl{O}(\bb{G}_{r})\rightarrow \bb{C}$ is norm-bounded, and thus extends to a $*$-homomorphism $\varepsilon_{r}: C_{r}(\bb{G})\rightarrow \bb{C}$. The results of \cite{BedosMurphTus01} show that $\bb{G}$ is co-amenable if and only if the Haar state $\varphi_{\bb{G}}$ is faithful and the co-unit $\varepsilon$ is norm-bounded; in this case, $\pi_{u}\circ \pi_{\mathbb{G}}$ is a $*$-isomorphism, and we have $C_{u}(\mathbb{G}) \cong C(\bb{G}) \cong C_{r}(\bb{G})$ $*$-isometrically as compact quantum groups, where the norm-boundedness of $\varepsilon$ is inherited from the norm-boundedness of $\varepsilon_{r}$ by assumption. Note that if $\bb{G}$ is assumed to be co-amenable, then using the $*$-isometric identification of $C(\bb{G})$ with $C_{r}(\bb{G})$, we may view any normal, continuous functional $\omega \in L^{1}(\bb{G}) = L^{\infty}(\bb{G})_{\ast}$ as a continuous functional $\hat{\omega}: C(\bb{G})\rightarrow \bb{C}$ via $\hat{\omega} := \omega\circ \pi_{\varphi_{\bb{G}}}$. Thus, we have an injective embedding $L^{1}(\bb{G}) \hookrightarrow C(\bb{G})^{*}$ for co-amenable compact quantum groups. With this embedding in mind, we say a \textit{bounded approximate identity of $L^{1}(\mathbb{G})$} is a net $(\omega_{\Lambda})_{\Lambda \in I} \subseteq L^{1}(\mathbb{G})$ such that:
\begin{itemize}
    \item[(i)] $\sup\limits_{\Lambda \in I}\|\omega_{\Lambda}\|_{L^{1}(\mathbb{G})} < \infty$;
    \item[(ii)] for every $\omega \in L^{1}(\mathbb{G})$, 
    \begin{gather*}
        \lim\limits_{\Lambda}\|\omega_{\Lambda}\ast \omega-\omega\|_{L^{1}(\mathbb{G})}\rightarrow 0 \;\;\;\; {\rm and} \;\;\;\; \lim\limits_{\Lambda}\|\omega\ast \omega_{\Lambda}-\omega\|_{L^{1}(\mathbb{G})}\rightarrow 0,
    \end{gather*}
    \noindent where $\ast$ is defined as in (\ref{eqn:convolution_product_for_group}). 
\end{itemize}
\begin{definition}\label{def:l_1_approx}
We call a bounded approximate identity $(\omega_{\Lambda})_{\Lambda \in I} \subseteq L^{1}(\mathbb{G})$ an {\rm $L^{1}(\mathbb{G})$-approximate identity} if, in addition:
\begin{itemize}
    \item[(i)] $\omega_{\Lambda} \geq 0$ for each $\Lambda \in I$;
    \item[(ii)] $\omega_{\Lambda}(1_{C(\mathbb{G})}) = 1$ for each $\Lambda \in I$;
    \item[(iii)] $\|\omega_{\Lambda}\|_{L^{1}(\mathbb{G})} = 1$ for each $\Lambda \in I$.
\end{itemize}
\end{definition}
\noindent Thus, an $L^{1}(\mathbb{G})$-approximate identity is a bounded approximate identity consisting of states. Co-amenability of $\mathbb{G}$ ensures the existence of at least one $L^{1}(\mathbb{G})$-approximate identity (see \cite[Theorem 2]{HuNeufangRuan10}). 

Let $\cl{A}$ be a unital ${\rm C}^{*}$-algebra, and let $\bb{G} = (C(\bb{G}), \Delta)$ be a compact quantum group. A \textit{(right) action} of $\bb{G}$ on $\cl{A}$ is a unital $*$-homomorphism $\alpha: \cl{A}\rightarrow \cl{A}\otimes C(\bb{G})$ such that:
\begin{itemize}
    \item[(i)] $(\alpha \otimes {\rm id}_{C(\bb{G})})\circ \alpha = ({\rm id}_{\cl{A}}\otimes \Delta)\circ \alpha$;
    \item[(ii)] the following density condition holds:
    \begin{gather*}
        [\alpha(\cl{A})(1_{\cl{A}}\otimes C(\bb{G}))] = \cl{A}\otimes C(\bb{G}).
    \end{gather*}
\end{itemize}
\noindent Such a $*$-homomorphism is called a \textit{co-action} of one unital $*$-algebra on another. If $\mathbb{G}$ acts on $\mathcal{A}$ via $\alpha$, we set 
\begin{gather*}
    \mathcal{A}^{\mathbb{G}} = \{a \in \cl{A}: \; \alpha(a) = a\otimes 1_{C(\mathbb{G})}\}
\end{gather*}
\noindent to be the fixed point subalgebra under the co-action. The action of $\mathbb{G}$ on $\mathcal{A}$ is called \textit{ergodic} if $\mathcal{A}^{\mathbb{G}} = \mathbb{C}1_{\cl{A}}$.  

\begin{remark}\label{rem:co-unit_identity}
\rm When $\mathbb{G}$ is co-amenable and acts on $\mathcal{A}$, the proof of \cite[Lemma 1.4.(a)]{ExelNg02} shows that $({\rm id}_{\mathcal{A}}\otimes \varepsilon)\alpha = {\rm id}_{\mathcal{A}}$, with $\alpha$ an injective unital $*$-homomorphism.
\end{remark}

Similar to the convolution product defined in (\ref{eqn:convolution_product_for_group}), for $\omega \in C(\mathbb{G})^{*}, \phi\in \mathcal{A}^{*}$ and $a \in \mathcal{A}$ define the elements
\begin{gather}\label{eqn:convolution_for_algebra}
    \omega \star a := ({\rm id}_{\mathcal{A}}\otimes \omega)\alpha(a), \;\;\;\; a\star \phi := (\phi\otimes {\rm id}_{C(\mathbb{G})})\alpha(a),
\end{gather}
\noindent which are in $\mathcal{A}$ and $C(\mathbb{G})$, respectively. 
\begin{remark}
\rm Note that if we view $\Delta: C(\mathbb{G})\rightarrow C(\mathbb{G})\otimes C(\mathbb{G})$ as a co-action of $\mathbb{G}$ on itself, then the product $\ast$ defined in (\ref{eqn:convolution_product_for_group}) coincide with product $\star$ defined in (\ref{eqn:convolution_for_algebra}).  
\end{remark}

\begin{remark}\label{rem:function_op_conv}
\rm To make the point explicit for the reader, suppose $G$ is a classical compact group with Haar measure $\mu_{G}$. Assume $G \curvearrowright \mathcal{A}$ is an action of $G$ on a unital ${\rm C}^{*}$-algebra $\mathcal{A} \subseteq \mathcal{B}(L^{2}(G, \mu_{G}))$ through $\sigma: G\rightarrow {\rm Aut}(\mathcal{A})$, where $\sigma$ denotes a homomorphism. We set $\mathbb{G} = (C(G), \Delta)$ where co-multiplication $\Delta: C(G)\rightarrow C(G\times G)$ is given by 
\begin{gather}\label{eqn:classical_co_multiplication}
    \Delta(f)(g, h) := f(gh), \;\;\;\; f \in C(G), \; g, h \in G.
\end{gather}
\noindent Note here we use the identification $C(G)\otimes C(G) \cong C(G\times G)$. If we consider $\mu_{G}: C(G)\rightarrow \bb{C}$ as a state on the unital, abelian ${\rm C}^{*}$-algebra of continuous functions over $G$, then $\mu_{G}$ is a faithful state corresponding precisely to the Haar state of $\mathbb{G}$, and the corresponding GNS representation yields the standard $L^{2}$-space $L^{2}(G, \mu_{G})$, with $\pi_{G}: C(G)\rightarrow \cl{B}(L^{2}(G, \mu_{G}))$ representing elements in $C(G)$ as multiplication operators. Additionally, $L^{\infty}(\bb{G}) = \pi_{G}(C(G))'' \cong L^{\infty}(G, \mu_{G})$, the space of all essentially bounded measurable functions on $G$, and $L^{1}(\mathbb{G}) = L^{\infty}(G, \mu_{G})_{\ast} = L^{1}(G, \mu_{G})$. On the dense Hopf $*$-algebra inside $C(G)$, $\varepsilon_{G}: \mathcal{O}(G)\rightarrow \mathbb{C}$ is given by $\varepsilon_{G}(f) := f(e)$, and $S_{G}: \mathcal{O}(G)\rightarrow \mathcal{O}(G)$ is given by $S_{G}(f)(g) := f(g^{-1})$ for $f \in \mathcal{O}(G)$.

The action of $G\curvearrowright \mathcal{A}$ induces an injective, unital $*$-homomorphism $\alpha: \mathcal{A}\rightarrow \mathcal{A}\otimes C(G)$ in the following way:
\begin{gather}\label{eqn:induced_coaction_from_classical}
    \alpha(a)(g) := \sigma_{g}(a), \;\;\;\; g \in G, a \in \mathcal{A},
\end{gather}
\noindent under the identification $\mathcal{A}\otimes C(G) \cong C(G; \mathcal{A})$. For any $f \in L^{1}(G, \mu)$, we may induce a linear functional $\omega_{f}: C(G)\rightarrow \mathbb{C}$ by setting
\begin{gather}\label{eqn:induced_functional_from_classical}
    \omega_{f}(\psi) := \int\limits_{G}f(g)\psi(g)d\mu(g), \;\;\;\; \psi \in C(G).
\end{gather}
\noindent That this is a well-defined bounded linear functional is easily derived from the properties of $f \in L^{1}(G, \mu)$ and the fact that $\mu$ is a probability measure on $G$. By the previous argument and the Riesz-Markov-Kakutani representation theorem, we have an injective embedding of $L^{1}(G, \mu_{G}) \hookrightarrow C(G)^{*} \cong M(G)$, the space of bounded complex Radon measures on $G$.

If $f \in L^{1}(G, \mu_{G})$ and $a \in \mathcal{A}$, we define the operator $f \star a \in \mathcal{B}(L^{2}(G, \mu_{G}))$ as the strong integral
\begin{gather*}
    f \star a := \int\limits_{G}f(g)\sigma_{g}(a)d\mu_{G}(g). 
\end{gather*} 
\noindent Using a density argument, one can then show that for $f \in L^{1}(G, \mu_{G})$ and $a \in \mathcal{A}$,
\begin{gather*}
    f\star a 
    = 
    \int\limits_{G}f(g)\sigma_{g}(a)d\mu_{G}(g)
    =
    \int\limits_{G}f(g)\alpha(a)(g)d\mu_{G}(g)
    =
    ({\rm id}_{\mathcal{A}}\otimes \phi_{f})\alpha(a) 
    =
    \phi_{f}\star a. 
\end{gather*}
\noindent This shows how (\ref{eqn:convolution_for_algebra}) recovers function-operator convolution for a classical group acting on a ${\rm C}^{*}$-algebra, as indicated first in the approach of \cite{Werner84}.
\end{remark} 

\subsection{Lip-norms on compact quantum groups}
The following definitions and results were taken from \cite{Li09}. For a compact quantum group $\bb{G} = (C(\bb{G}), \Delta)$, a Lip-norm $L$ on $C(\bb{G})$ is called \textit{regular} if $L|_{\cl{O}(\bb{G})} < \infty$; that is, if $L$ acting only on the dense Hopf $*$-algebra corresponding to $\bb{G}$ is finite. A regular Lip-norm $L$ on $C(\bb{G})$ is said to be \textit{left-invariant} (respectively, \textit{right-invariant}) if for all $x \in C(\bb{G})_{h}$ and all $\mu \in S(C(\bb{G}))$ we have
\begin{gather*}
    L(x\ast \mu) \leq L(x) \;\;\;\; ({\rm respectively}, \; L(\mu\ast x) \leq L(x)).
\end{gather*}
\noindent We say $L$ is \textit{bi-invariant}, if it is both left and right-invariant. The results surrounding \cite[Proposition 8.9]{Li09} show that if $\mathbb{G}$ is separable (i.e., $C(\bb{G})$ is separable as a Banach space) and co-amenable, then there exist bi-invariant regular Lip-norms on $\bb{G}$. 

If $\mathbb{G} = (C(\bb{G}), \Delta)$ is a compact quantum group acting on unital ${\rm C}^{*}$-algebra $\cl{A}$ via $\alpha: \cl{A}\rightarrow \cl{A}\otimes C(\bb{G})$, where the action is ergodic and $\bb{G}$ is co-amenable, then any regular Lip-norm $L_{\bb{G}}$ on $C(\bb{G})$ induces a corresponding Lip-norm $L_{\cl{A}}$ on $\cl{A}$, defined by
\begin{gather}\label{eqn:induced_lip_norm_on_a}
    L_{\cl{A}}(a) := \sup\limits_{\phi \in S(\cl{A})}L_{\bb{G}}(a\star\phi),
\end{gather}
\noindent where $a \star \phi$ is as given in (\ref{eqn:convolution_for_algebra}), and that $L_{\cl{A}}$ is regular on $\cl{A}$. Furthermore, the induced Lip-norm is $\alpha$-invariant, in that
\begin{gather*}
    L_{\cl{A}}(\mu\star a) \leq L_{\cl{A}}(a),
\end{gather*}
\noindent for any $a \in \cl{A}_{h}$ and all $\mu \in S(C(\bb{G}))$ where $\mu\star a$ is as given in (\ref{eqn:convolution_for_algebra}).

\section{Approximations for compact quantum groups acting on ${\rm C}^{*}$-algebras}\label{sec:approximations_from_cqg_actions}

In all of what follows, we set $\bb{G} = (C(\bb{G}), \Delta)$ as a co-amenable, separable compact quantum group acting on a unital ${\rm C}^{*}$-algebra $\mathcal{A}$ via co-action $\alpha: \mathcal{A}\rightarrow \cl{A}\otimes C(\bb{G})$. We fix $L_{\mathbb{G}}$ to be any bi-invariant regular Lip-norm on $\mathbb{G}$; we then let $L_{\cl{A}}:\mathcal{A}\rightarrow [0, \infty]$ denote the induced Lip-norm on $\mathcal{A}$ as given in (\ref{eqn:induced_lip_norm_on_a}). Now, and in the sequel, assume that $\{\varphi_{t}\}_{t > 0} \subseteq L^{1}(\mathbb{G})$ is a bounded approximate identity. 
While the following lemma is almost certainly well-known to the experts, we provide a proof for the convenience of the reader. 

\begin{lemma}\label{lem:norm_density_of_convolution_in_a}
Let $\mathcal{A}$ be a unital ${\rm C}^{*}$-algebra, and let $\mathbb{G} = (C(\mathbb{G}), \Delta)$ be a co-amenable compact quantum group with co-action $\alpha: \mathcal{A}\rightarrow \mathcal{A}\otimes C(\mathbb{G})$. Then $L^{1}(\mathbb{G})\star \mathcal{A}$ is norm-dense in $\mathcal{A}$. 
\end{lemma}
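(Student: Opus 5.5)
The plan is to approximate the identity map on $\mathcal{A}$ by convolution with elements of $L^{1}(\mathbb{G})$. The key input is the co-unit identity of Remark \ref{rem:co-unit_identity}: since $\mathbb{G}$ is co-amenable, $({\rm id}_{\mathcal{A}}\otimes \varepsilon)\alpha = {\rm id}_{\mathcal{A}}$, with $\varepsilon$ a bounded character on $C(\mathbb{G})\cong C_{r}(\mathbb{G})$. Thus $a = \varepsilon\star a$ for every $a\in\mathcal{A}$. It therefore suffices to show that $\varepsilon$ is a weak$^{*}$-limit of states $\omega\in L^{1}(\mathbb{G})$, viewed in $C(\mathbb{G})^{*}$ via the embedding $L^{1}(\mathbb{G})\hookrightarrow C(\mathbb{G})^{*}$, and that this weak$^{*}$ convergence is enough to force norm convergence of $\omega\star a$ to $a$. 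Weak$^{*}$ convergence alone does not give norm convergence in $\mathcal{A}$, so I will upgrade it by working on an algebraic dense subspace.

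\textbf{Step 1.} I will show that $\varepsilon$ lies in the weak$^{*}$-closure of the normal states. The normal states on $L^{\infty}(\mathbb{G})$, restricted to $C_{r}(\mathbb{G})$, are weak$^{*}$-dense in $S(C_{r}(\mathbb{G}))$. This follows from Hahn--Banach, or from the Kaplansky/bipolar argument, since $C_{r}(\mathbb{G})$ acts nondegenerately on $L^{2}(\mathbb{G})$ and vector states of a faithful representation are weak$^{*}$-dense in the state space up to convex closure. Since $\varepsilon\in S(C(\mathbb{G}))$, we obtain a net of states $\omega_{j}\in L^{1}(\mathbb{G})$ with $\omega_{j}(x)\to\varepsilon(x)$ for all $x\in C(\mathbb{G})$. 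As an alternative, one can take an $L^{1}(\mathbb{G})$-approximate identity $(\varphi_{t})$ (which exists by \cite[Theorem 2]{HuNeufangRuan10}). Testing $\varphi_{t}\ast\omega\to\omega$ against elements of $\mathcal{O}(\mathbb{G})$ shows $\varphi_{t}\to\varepsilon$ pointwise on $\mathcal{O}(\mathbb{G})$, because $\omega\ast x$ ranges over enough of $\mathcal{O}(\mathbb{G})$ on matrix coefficients. Boundedness of the net then extends this to all of $C(\mathbb{G})$.

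\textbf{Step 2.} I will upgrade to norm convergence on $\mathcal{A}$. First fix $a$ in the algebraic spectral subspace $\mathcal{A}_{0}$ of the action, i.e.\ the span of elements $a$ with $\alpha(a)=\sum_{k=1}^{N} a_{k}\otimes x_{k}$, where $x_{k}\in\mathcal{O}(\mathbb{G})$ is a finite sum of matrix coefficients. For such $a$,
\[
\|\omega_{j}\star a - a\| = \Bigl\|\sum_{k}(\omega_{j}(x_{k})-\varepsilon(x_{k}))a_{k}\Bigr\| \le \sum_{k}|\omega_{j}(x_{k})-\varepsilon(x_{k})|\,\|a_{k}\|\to 0 .
\]
It is a standard result (Podle\'s) that $\mathcal{A}_{0}$ is norm-dense in $\mathcal{A}$, via the spectral projections $E_{\pi}=({\rm id}\otimes h_{\pi})\alpha$ built from the Haar state. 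For general $a$, choose $a_{0}\in\mathcal{A}_{0}$ with $\|a-a_{0}\|<\delta$. Since each $\omega_{j}$ is a state, it is completely positive, so ${\rm id}\otimes\omega_{j}$ is contractive on $\mathcal{A}\otimes C(\mathbb{G})$ and $\|\omega_{j}\star(a-a_{0})\|\le\delta$. A $3\delta$ argument then gives $\omega_{j}\star a\to a$ in norm. Since each $\omega_{j}\star a$ lies in $L^{1}(\mathbb{G})\star\mathcal{A}$, this proves density.

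The main obstacle is the density of $\mathcal{A}_{0}$ in $\mathcal{A}$, since the paper has not recorded it. If I want to avoid citing Podle\'s, I will instead use the density condition $[\alpha(\mathcal{A})(1\otimes C(\mathbb{G}))]=\mathcal{A}\otimes C(\mathbb{G})$ directly. Given $a$ and $\delta$, I approximate $\alpha(a)$ in norm by a finite sum $\sum a_{k}\otimes x_{k}$ with $x_{k}\in\mathcal{O}(\mathbb{G})$, using density of $\mathcal{A}\odot\mathcal{O}(\mathbb{G})$ in the minimal tensor product. Applying the contractions ${\rm id}\otimes\omega_{j}$ and ${\rm id}\otimes\varepsilon$ to both sides reduces the problem to the finite-sum estimate above, plus two error terms each bounded by $\delta$. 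This route needs only Remark \ref{rem:co-unit_identity} and pointwise convergence $\omega_{j}\to\varepsilon$ on $C(\mathbb{G})$, so I would adopt it as the actual proof.
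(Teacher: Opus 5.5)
Your proof is correct, but it takes a different route from the paper's.

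\emph{The paper's route.} The paper argues by exact containment. It observes that the Podle\'s spectral projection $E_\pi$ is itself a convolution, $E_\pi(a)=(\chi_\pi^*\cdot\varphi_{\mathbb{G}})\star a$. Here $\chi_\pi^*\cdot\varphi_{\mathbb{G}}$ is the vector functional $\omega_{\Lambda(\chi_\pi),\Lambda(1)}$, so it lies in $L^1(\mathbb{G})$. Hence the algebraic core $\bigoplus_\pi\mathcal{A}^\pi$, which is dense by Podle\'s' theorem, sits inside $L^1(\mathbb{G})\star\mathcal{A}$. This needs the representation theory of $\mathbb{G}$ and Podle\'s' density theorem, but it never uses boundedness of the counit.

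\emph{Your route.} Your argument is soft and uses three ingredients:
\begin{itemize}
\item the identity $\varepsilon\star a=a$ from Remark \ref{rem:co-unit_identity}, which is where co-amenability and the density condition on $\alpha$ enter;
\item a Hahn--Banach/bipolar argument showing that normal states are weak$^*$-dense in $S(C(\mathbb{G}))$, valid because $\pi_{\mathbb{G}}$ is faithful;
\item the fact that slicing with a uniformly bounded, weak$^*$-convergent net of functionals converges in norm pointwise on $\mathcal{A}\otimes C(\mathbb{G})$, by approximating $\alpha(a)$ with finite tensors.
\end{itemize}
In exchange you get a stronger conclusion: a single net of normal states $\omega_j$ with $\omega_j\star a\to a$ for every $a$. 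That is essentially Proposition \ref{prop:properties_of_convolution}(ii) for nets of states converging weak$^*$ to $\varepsilon$, obtained without any spectral decomposition.

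\emph{Three small remarks.}
\begin{itemize}
\item In your final version, the condition $[\alpha(\mathcal{A})(1\otimes C(\mathbb{G}))]=\mathcal{A}\otimes C(\mathbb{G})$ plays no role in the approximation step. What you actually use is density of $\mathcal{A}\odot C(\mathbb{G})$ in the minimal tensor product.
\item You do not need $x_k\in\mathcal{O}(\mathbb{G})$. Step 1 already gives $\omega_j(x)\to\varepsilon(x)$ for all $x\in C(\mathbb{G})$.
\item The alternative in Step 1 via an approximate identity is only sketched. Showing that the elements $\omega\ast x$ exhaust $\mathcal{O}(\mathbb{G})$ needs Schur orthogonality, for instance with the functionals $y^*\cdot\varphi_{\mathbb{G}}$. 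It is unnecessary given the Hahn--Banach argument, so drop it. Note also that in the paper the pointwise convergence $\varphi_t\to\varepsilon$ (Lemma \ref{lem:seminorm_convergence}(ii)) is derived downstream of this lemma, so it could not be quoted here.
\end{itemize}
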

\begin{proof}
Start by fixing an irreducible representation $\pi \in {\rm Irr}(\bb{G})$, and let $\{e_{i}^{\pi}\}_{i=1}^{n_{\pi}}$ be an orthonormal basis for $H_{\pi}$ corresponding to $\pi$. Set 
\begin{gather*}
    \chi_{\pi} := \sum\limits_{i=1}^{n_{\pi}}{\rm Tr}(Q_{\pi})U_{\pi}(e_{i}, Q_{\pi}^{-1}e_{i}), 
\end{gather*}
\noindent where $U_{\pi}(e_{i}, Q_{\pi}^{-1}e_{i})$ is the matrix coefficient in $C(\bb{G})$ defined in the discussion surrounding (\ref{eqn:comult_on_matrix_coeff}). For $a \in \mathcal{A}$, define $E_{\pi}(a) := ({\rm id}_{\cl{A}}\otimes \varphi_{\bb{G}})(\alpha(a)(1_{\cl{A}}\otimes \chi_{\pi}^{*}))$. Using properties of slice maps, we have that
\begin{eqnarray*}
    E_{\pi}(a) 
    & = &
    ({\rm id}_{\cl{A}}\otimes \varphi_{\bb{G}})(\alpha(a)(1_{\cl{A}}\otimes \chi_{\pi}^{*})) \\
    & = &
    ({\rm id}_{\cl{A}}\otimes (\chi_{\pi}^{*}\cdot \varphi_{\bb{G}}))\alpha(a) \\
    & = &
    (\chi_{\pi}^{*}\cdot \varphi_{\bb{G}})\star a,
\end{eqnarray*}
\noindent where for $\varphi \in C(\bb{G})^{*}$ and $x \in C(\bb{G})$ we let $x\cdot \varphi \in C(\bb{G})^{*}$ be the functional defined via $(x\cdot \varphi)(y) := \varphi(yx)$, for $y \in C(\bb{G})$. For a fixed representation, we set $\cl{A}^{\pi} \subseteq \cl{A}$ as the subspace consisting of all $a \in \cl{A}$ such that $E_{\pi}(a) = a$. The algebraic core $\cl{O}_{\bb{G}}(\cl{A})$ of $\bb{G}$ acting on $\cl{A}$ is defined as 
\begin{gather*}
    \cl{O}_{\bb{G}}(\cl{A}) = \bigoplus\limits_{\pi \in {\rm Irr}(\bb{G})}\cl{A}^{\pi},
\end{gather*}
\noindent and by \cite[Theorem 1.5]{Podles95} this is a norm-dense, unital $*$-subalgebra of $\cl{A}$. Now, as $\bb{G}$ is a compact quantum group, $C(\bb{G})$ is unital, with $\Lambda(1_{C(\bb{G})}) = \xi_{\bb{G}} \in L^{2}(\bb{G})$ from the GNS representation using $\varphi_{\bb{G}}$. By construction, we also see for each $\pi \in {\rm Irr}(\bb{G})$ that
\begin{gather*}
    (\chi_{\pi}^{*}\cdot \varphi_{\bb{G}})(x) = \varphi_{\bb{G}}(x\chi_{\pi}^{*}) = \langle \xi_{\bb{G}}, \pi_{\bb{G}}(x\chi_{\pi}^{*})\xi_{\bb{G}}\rangle = \langle \pi_{\bb{G}}(\chi_{\pi})\xi_{\bb{G}}, \pi_{\bb{G}}(x)\xi_{\bb{G}}\rangle \\= \langle \Lambda(\chi_{\pi}), \pi_{\bb{G}}(x)\Lambda(1_{C(\bb{G})})\rangle = \omega_{\Lambda(\chi_{\pi}), \Lambda(1_{C(\bb{G})})}(x),
\end{gather*}
\noindent showing $\chi_{\pi}^{*}\cdot \varphi_{\bb{G}}$ can be written as a vector functional on $C(\bb{G})$. This extends to a normal functional on $L^{\infty}(\bb{G})$ (which we denote in the same way), and thus may be considered as an element of $L^{1}(\bb{G})$. Therefore, for every $\pi \in {\rm Irr}(\bb{G})$, we have that $\chi_{\pi}^{*}\cdot \varphi_{\bb{G}} \in L^{1}(\bb{G})$, and so $(\chi_{\pi}^{*}\cdot \varphi_{\bb{G}})\star a \in L^{1}(\bb{G})\star \cl{A}$ for all $a \in \cl{A}$. This implies $\cl{O}_{\bb{G}}(\cl{A}) \subseteq L^{1}(\bb{G})\star \cl{A}$; as the former space is norm-dense inside $\cl{A}$, this also shows $L^{1}(\bb{G})\star \cl{A}$ must be norm-dense inside $\cl{A}$ as claimed. 
\end{proof}

\begin{proposition}\label{prop:properties_of_convolution}
Let $\mathcal{A}$ be a unital ${\rm C}^{*}$-algebra, and let $\mathbb{G} = (C(\mathbb{G}), \Delta)$ be a co-amenable compact quantum group with co-action $\alpha: \mathcal{A}\rightarrow \mathcal{A}\otimes C(\mathbb{G})$. 
\begin{itemize}
    \item[(i)] If $\omega \in L^{1}(\mathbb{G})$ is a positive functional, and $a \in \mathcal{A}_{+}$, then $\omega \star a \in \mathcal{A}_{+}$. 
    \item[(ii)] Suppose $\{\varphi_{t}\}_{t} \subseteq L^{1}(\mathbb{G})$ is a bounded approximate identity, and $a \in \mathcal{A}$. Then $\varphi_{t}\star a\rightarrow a$ in the ${\rm C}^{*}$-norm of $\mathcal{A}$ as $t\rightarrow \infty$. 
\end{itemize}
\end{proposition}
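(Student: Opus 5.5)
For (i), I will use complete positivity of slice maps. Since $\mathbb{G}$ is co-amenable, $\omega \in L^{1}(\mathbb{G})$ gives a bounded functional $\hat{\omega} = \omega\circ\pi_{\mathbb{G}}$ on $C(\mathbb{G})$. If $\omega \geq 0$ on $L^{\infty}(\mathbb{G})$, then $\hat{\omega}$ is a positive functional on $C(\mathbb{G})$, hence completely positive. The slice map ${\rm id}_{\mathcal{A}}\otimes \hat{\omega}: \mathcal{A}\otimes C(\mathbb{G})\rightarrow \mathcal{A}$ is then completely positive; this is the operator system statement recorded after the slice map discussion, or equivalently the fact that ${\rm id}\otimes\hat\omega$ is the slice of a positive functional on the minimal tensor product. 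Since $\alpha$ is a $*$-homomorphism, $a\in\mathcal{A}_{+}$ gives $\alpha(a) = \alpha(a^{1/2})^{*}\alpha(a^{1/2}) \geq 0$. Therefore $\omega\star a = ({\rm id}_{\mathcal{A}}\otimes\hat\omega)\alpha(a) \in \mathcal{A}_{+}$. To be safe about positivity of the slice, I would check it against states: for $\phi\in S(\mathcal{A})$, $\phi(\omega\star a) = (\phi\otimes\hat\omega)(\alpha(a)) \geq 0$, because $\phi\otimes\hat\omega$ is a positive functional on the minimal tensor product.

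For (ii), the plan is an $\varepsilon/3$ argument: prove the convergence on the dense set $L^{1}(\mathbb{G})\star\mathcal{A}$ from Lemma \ref{lem:norm_density_of_convolution_in_a}, then extend using uniform boundedness. Two facts are needed first.

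\emph{Uniform boundedness.} Each map $a\mapsto \varphi_t\star a$ satisfies $\|\varphi_t\star a\|\leq \|\hat\varphi_t\|\,\|a\| \leq \|\varphi_t\|_{L^{1}(\mathbb{G})}\|a\|$. This holds because a slice map by a bounded functional has norm at most the norm of the functional (it is completely bounded), and $\alpha$ is isometric by Remark \ref{rem:co-unit_identity}. The right-hand side is bounded by $C\|a\|$, where $C = \sup_t\|\varphi_t\|$.

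\emph{Module property.} For $\omega,\rho\in L^{1}(\mathbb{G})$ and $a\in\mathcal{A}$, we have $\rho\star(\omega\star a) = (\rho\ast\omega)\star a$, up to the order convention. To verify it, use the coaction identity $(\alpha\otimes{\rm id})\alpha = ({\rm id}\otimes\Delta)\alpha$:
\begin{gather*}
({\rm id}\otimes\rho)\alpha\big(({\rm id}\otimes\omega)\alpha(a)\big) = ({\rm id}\otimes\omega\otimes\rho)(\alpha\otimes{\rm id})\alpha(a)\cdot\text{(flipped)} = ({\rm id}\otimes(\omega\otimes\rho)\Delta)\alpha(a).
\end{gather*}
Here slices are commuted with $\alpha$ by checking on elementary tensors and extending by continuity. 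With the convention $(\omega\ast\rho)(x) = (\omega\otimes\rho)\Delta(x)$ this gives $\rho\star(\omega\star a) = (\omega\ast\rho)\star a$. The side on which $\varphi_t$ lands therefore depends on convention, but both one-sided limits are available in the definition of a bounded approximate identity, so either case is covered. I also need that the embedding $L^{1}(\mathbb{G})\hookrightarrow C(\mathbb{G})^{*}$ intertwines the convolution on $L^{1}(\mathbb{G})$, defined via the normal extension of $\Delta$, with the convolution on $C(\mathbb{G})^{*}$. This follows since the normal $\Delta$ on $L^{\infty}(\mathbb{G})$ extends the one on $C_r(\mathbb{G})\cong C(\mathbb{G})$.

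With these in hand, take $b = \omega\star c$ with $\omega\in L^{1}(\mathbb{G})$. Then
\begin{gather*}
\|\varphi_t\star b - b\| = \|(\omega\ast\varphi_t - \omega)\star c\| \leq \|\omega\ast\varphi_t-\omega\|_{L^{1}(\mathbb{G})}\|c\| \rightarrow 0.
\end{gather*}
This extends to finite sums by linearity. For general $a$ and $\varepsilon>0$, pick $b$ in ${\rm span}(L^{1}(\mathbb{G})\star\mathcal{A})$ with $\|a-b\|<\varepsilon$; Lemma \ref{lem:norm_density_of_convolution_in_a} even gives $b = \omega\star c$ directly, since the algebraic core lies in this set. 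Then
\begin{gather*}
\|\varphi_t\star a - a\| \leq (C+1)\varepsilon + \|\varphi_t\star b-b\|,
\end{gather*}
and the right-hand side is eventually at most $(C+2)\varepsilon$.

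The main obstacle is the module identity. Specifically, one must justify interchanging slice maps with $\alpha$ on the minimal tensor product for functionals that are only bounded, not positive, and match the convolution orders correctly. I would handle it by checking the identity on $\mathcal{O}_{\mathbb{G}}(\mathcal{A})$ or on elementary tensors and then using continuity of all maps involved.
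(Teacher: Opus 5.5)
Your proposal is correct and follows the paper's proof. For (i) you use complete positivity of $({\rm id}_{\mathcal{A}}\otimes\omega)\circ\alpha$, and for (ii) you use the coaction identity on elements $b=\omega\star c$ of the dense set from Lemma~\ref{lem:norm_density_of_convolution_in_a}, followed by an $\epsilon/3$ estimate; your use of $C=\sup_t\|\varphi_t\|$ there is slightly more careful than the paper's bound $\|\varphi_t\star(a-b)\|\le\|a-b\|$. One small slip: with the convention $(\omega\ast\rho)(x)=(\omega\otimes\rho)\Delta(x)$ the identity is $\rho\star(\omega\star a)=(\rho\ast\omega)\star a$, so $\varphi_t\star b=(\varphi_t\ast\omega)\star c$; as you anticipated, this is harmless because the approximate identity is two-sided.
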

\begin{proof}
(i) Assume $\omega \in L^{1}(\mathbb{G})$ is positive--- that is, $\omega(x) \geq 0$ for all $x \in C(\mathbb{G})^{+}$. As $C(\mathbb{G})$ is a ${\rm C}^{*}$-algebra, and as $\omega: C(\mathbb{G})\rightarrow \mathbb{C}$ is a positive linear functional with commutative range, then $\omega$ is automatically completely positive (see \cite[Proposition 3.8]{paulsen}). As $\alpha$ is a $*$-homomorphism between ${\rm C}^{*}$-algebras, it is also completely positive (as each amplification $\alpha^{(n)}$ for $n \in \mathbb{N}$ is positive and contractive as a $*$-homomorphism on the matricial level); furthermore, it is trivial that ${\rm id}_{\mathcal{A}}: \mathcal{A}\rightarrow \mathcal{A}$ acts as a completely bounded, completely positive map on $\mathcal{A}$. Thus, $({\rm id}_{\mathcal{A}} \otimes \omega): \mathcal{A}\otimes C(\mathbb{G})\rightarrow \mathcal{A}$ acts as a completely positive map between ${\rm C}^{*}$-algebras (see, for instance, \cite[Corollary IV.4.25]{Takesaki03}), with $({\rm id}_{\mathcal{A}}\otimes \omega) \alpha: \mathcal{A}\rightarrow \mathcal{A}$ a completely positive map (as the composition of two completely positive maps). Therefore, if $a \in \mathcal{A}_{+}$, $\omega \star a = ({\rm id}_{\mathcal{A}}\otimes \omega)\alpha(a) \in \mathcal{A}_{+}$ as claimed.

(ii) Let $a \in \mathcal{A}$ and $\{\varphi_{t}\}_{t} \subseteq L^{1}(\mathbb{G})$ be as above. Take $\epsilon > 0$ arbitrary but fixed. For the moment, take arbitrary $b = ({\rm id}_{\mathcal{A}}\otimes \omega)\alpha(\hat{a}) \in L^{1}(\mathbb{G})\star \mathcal{A}$, and note that (using the compatibility relations arising from the co-action) for any $\varphi_{t}$, 
\begin{eqnarray*}
    \varphi_{t}\star b 
    & = &
    ({\rm id}_{\mathcal{A}}\otimes \varphi_{t})\alpha(b) \\
    & = &
    ({\rm id}_{\mathcal{A}}\otimes \varphi_{t})\alpha(({\rm id}_{\mathcal{A}}\otimes \omega)\alpha(\hat{a})) \\
    & = &
    ({\rm id}_{\mathcal{A}}\otimes \varphi_{t}\otimes \omega)(\alpha\otimes {\rm id}_{C(\mathbb{G})})\alpha(\hat{a}) \\
    & = &
    ({\rm id}_{\mathcal{A}}\otimes \varphi_{t}\otimes \omega)({\rm id}_{\mathcal{A}}\otimes \Delta)\alpha(\hat{a}) \\
    & = &
    ({\rm id}_{\mathcal{A}}\otimes \varphi_{t}\star \omega)\alpha(\hat{a}) \\
    & = &
    (\varphi_{t}\star\omega)\star \hat{a}.
\end{eqnarray*}
\noindent As $\{\varphi_{t}\}_{t}$ is a bounded approximate identity, there exists $T > 0$ such that if $t \geq T$, $\|(\varphi_{t}\star \omega)-\omega\|_{L^{1}(\mathbb{G})} \leq \epsilon/2\|\alpha(\hat{a})\|$, and thus for $t \geq T$ we have that $\|\varphi_{t}\star b-b\| \leq \epsilon/2$. Now, using density of $L^{1}(\mathbb{G})\star \mathcal{A}$ inside $\mathcal{A}$ (via Lemma \ref{lem:norm_density_of_convolution_in_a}), pick $b \in L^{1}(\mathbb{G})\star \mathcal{A}$ such that $\|a-b\| < \epsilon/4$. For the same $t \geq T$ (dependent on our choice of $b$) as before, we have that
\begin{eqnarray*}
    \|\varphi_{t}\star a-a\| 
    & \leq &
    \|\varphi_{t}\star a-\varphi_{t}\star b\|+\|\varphi_{t}\star b-b\|+\|b-a\| \\
    & \leq &
    \|a-b\|+\|\varphi_{t}\star b-b\|+\|b-a\| \\
    & \leq &
    2\bigg(\frac{\epsilon}{4}\bigg)+\|\varphi_{t}\star b-b\| \\
    & \leq &
    \frac{\epsilon}{2}+\frac{\epsilon}{2} = \epsilon,
\end{eqnarray*}
\noindent where we also use the fact that $\|\omega \star a\| \leq \|\omega\|_{L^{1}(\mathbb{G})}\|a\|$. This implies the claim of (iii). 
\end{proof}

\begin{corollary}\label{cor:norm_convergence_for_bai}
Under previous assumptions, we have that $\varphi_{t}\star x\rightarrow x$ in the ${\rm C}^{*}$-norm of $C(\mathbb{G})$ for all $x \in C(\mathbb{G})$. 
\end{corollary}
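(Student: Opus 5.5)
The plan is to specialize Proposition \ref{prop:properties_of_convolution}(ii) to the case $\mathcal{A} = C(\mathbb{G})$ with the co-action $\alpha = \Delta$. By the remark following (\ref{eqn:convolution_for_algebra}), the product $\star$ for this co-action agrees with the product $\ast$ in (\ref{eqn:convolution_product_for_group}). Thus $\varphi_{t}\star x = ({\rm id}\otimes \varphi_{t})\Delta(x) = \varphi_{t}\ast x$, and the corollary reduces to the norm convergence statement of Proposition \ref{prop:properties_of_convolution}(ii) applied to this particular action.

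The only step requiring checking is that $\Delta$ really is a (right) action of $\mathbb{G}$ on the unital ${\rm C}^{*}$-algebra $C(\mathbb{G})$ in the sense of Subsection \ref{ss:hopf_alg_cqg}. The following verifications suffice:
\begin{itemize}
    \item[(i)] $\Delta$ is a unital $*$-homomorphism by definition of a compact quantum group.
    \item[(ii)] The coaction identity $(\Delta\otimes{\rm id})\Delta = ({\rm id}\otimes\Delta)\Delta$ is exactly coassociativity.
    \item[(iii)] The density condition $[\Delta(C(\mathbb{G}))(1\otimes C(\mathbb{G}))] = C(\mathbb{G})\otimes C(\mathbb{G})$ is the first of the cancellation conditions in the definition of a CQG.
\end{itemize}

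I would also note that the inputs to Proposition \ref{prop:properties_of_convolution}(ii) remain available in this setting. Lemma \ref{lem:norm_density_of_convolution_in_a} still applies, since the Podle\'s algebraic core of $\Delta$ is $\mathcal{O}(\mathbb{G})$, which is dense. Co-amenability also lets us regard $\varphi_{t}$ as functionals on $C(\mathbb{G})$. No ergodicity is needed anywhere in Proposition \ref{prop:properties_of_convolution}, so the fact that $\Delta$ is ergodic is irrelevant.

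I expect no real obstacle here, since everything reduces to bookkeeping. The one subtle point is matching conventions: the paper's definition $\omega\ast x := ({\rm id}\otimes\omega)\Delta(x)$ must coincide with $\omega\star x$ for $\alpha=\Delta$, which it does verbatim. If one instead wanted the other-sided convergence $x\ast\varphi_{t}\to x$, the same argument would apply with the left version, using the second density condition and the two-sided approximate identity property.
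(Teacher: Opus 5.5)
Your proposal is correct and is exactly the paper's argument: the paper proves the corollary by applying Proposition~\ref{prop:properties_of_convolution}(ii) to $\Delta$, regarded as a co-action of $\mathbb{G}$ on $C(\mathbb{G})$, so that $\varphi_t\star x=(\mathrm{id}\otimes\varphi_t)\Delta(x)$. Your explicit check of the co-action axioms (coassociativity and the first density condition) and of the agreement of $\star$ with $\ast$ fills in details the paper leaves implicit.
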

\begin{proof}
This follows from Proposition \ref{prop:properties_of_convolution} (ii), when we consider $\Delta: C(\mathbb{G})\rightarrow C(\mathbb{G})\otimes C(\mathbb{G})$ as a co-action of $\mathbb{G}$ on itself. 
\end{proof}

Under previous assumptions, set
\begin{gather}\label{eqn:set_k}
    K := \{x \in C(\mathbb{G}): \; x \in B_{1}^{L_{\bb{G}}}(0),  \; \varepsilon(x) = 0\}
\end{gather}
\noindent as a subset of $C(\mathbb{G})$. Define the function $\|\cdot\|_{*}: C(\mathbb{G})^{*}\rightarrow [0, \infty)$ via
\begin{gather}\label{eqn:seminorm}
    \|\eta\|_{*} := \sup\{|\eta(x)|: \; x \in K\}.
\end{gather}
\begin{lemma}\label{lem:seminorm_convergence}
The following hold:
\begin{itemize}
    \item[(i)] $\|\cdot\|_{\ast}$ is a semi-norm on $C(\mathbb{G})^{*}$;
    \item[(ii)] If $\{\varphi_{t}\}_{t > 0} \subseteq L^{1}(\mathbb{G})$ is an $L^{1}(\mathbb{G})$-approximate identity, we have that $\|\varphi_{t}-\epsilon\|_{\ast}\rightarrow 0$ as $t \rightarrow \infty$.
\end{itemize}
\end{lemma}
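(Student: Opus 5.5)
For (i) I will show that $\|\cdot\|_{\ast}$ is finite and satisfies the seminorm axioms. Each $x \in K$ has $L_{\bb{G}}(x) \le 1$ and $\varepsilon(x)=0$. Since $L_{\bb{G}}$ is a Lip-norm, condition (iv') and \cite[Theorem 1.8]{rieffel} give that $q(B_1^{L_{\bb{G}}}(0))$ is bounded in $C(\bb{G})/\mathbb{C}$. So there is $R>0$ such that every $x\in K$ can be written $x = y + \lambda 1$ with $\|y\|\le R$. Applying the bounded character $\varepsilon$ (co-amenability) gives $\lambda = -\varepsilon(y)$, so $|\lambda|\le R$ and $\|x\|\le 2R$. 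Hence $K$ is norm-bounded and $\|\eta\|_{\ast}\le 2R\|\eta\| <\infty$ for every $\eta \in C(\bb{G})^*$. Homogeneity and the triangle inequality are immediate from the definition as a supremum of $|\eta(x)|$ over a fixed set.

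For (ii) the plan is an equicontinuity-plus-total-boundedness argument. First, $q(K)$ is totally bounded by (iv'). Lifting through the normalisation $x\mapsto x-\varepsilon(x)1$, which is bounded on $C(\bb{G})$ and kills constants, shows that $K$ is totally bounded in the norm of $C(\bb{G})$. Indeed, if $\|q(x)-q(x_j)\|<\delta$, then $\|x-x_j\|\le (1+\|\varepsilon\|)\cdot 2\delta$ after choosing a good representative. Given $\epsilon>0$, choose a finite $\delta$-net $x_1,\dots,x_N\in K$ of $K$. For each $j$, Corollary \ref{cor:norm_convergence_for_bai} gives $\|\varphi_t \ast x_j - x_j\|\to 0$. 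Applying the bounded functional $\varepsilon$ and using $(\mathrm{id}\otimes\varepsilon)\Delta = \mathrm{id}$ (Remark \ref{rem:co-unit_identity} for $\Delta$) yields
\begin{gather*}
\varepsilon(\varphi_t\ast x_j) = (\varepsilon\otimes\varphi_t)\Delta(x_j) = \varphi_t(x_j),
\end{gather*}
so that $|\varphi_t(x_j)-\varepsilon(x_j)|\le\|\varepsilon\|\,\|\varphi_t\ast x_j-x_j\|\to0$. (Alternatively one can argue directly with the definition of the convolution $\varphi_t \ast x$ from (\ref{eqn:convolution_product_for_group}), depending on which side it acts.)

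Finally, for arbitrary $x\in K$ with $\|x-x_j\|<\delta$,
\begin{gather*}
|\varphi_t(x)-\varepsilon(x)| \le |\varphi_t(x-x_j)|+|\varphi_t(x_j)-\varepsilon(x_j)|+|\varepsilon(x_j-x)| \le (1+\|\varepsilon\|)\delta + \max_j|\varphi_t(x_j)-\varepsilon(x_j)|.
\end{gather*}
Here I use $\|\varphi_t\|=1$ from Definition \ref{def:l_1_approx}. Since $\varepsilon$ is a character, $\|\varepsilon\|=1$. Taking the supremum over $K$ and then $t$ large gives $\|\varphi_t-\varepsilon\|_{\ast}\le 3\delta$ eventually, which proves (ii).

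The main obstacle is the total boundedness of $K$ in the actual norm, rather than merely in the quotient norm. This is exactly where the condition $\varepsilon(x)=0$ and the boundedness of $\varepsilon$ from co-amenability enter. The other point needing care is identifying $\varphi_t(x)$ with $\varepsilon(\varphi_t\ast x)$ on the correct side of the convolution.
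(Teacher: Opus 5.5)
Your proposal is correct and follows essentially the same route as the paper. Both arguments bound $K$ in norm via Rieffel's criterion plus boundedness of $\varepsilon$, then upgrade total boundedness of $q(K)$ to total boundedness of $K$ using $\varepsilon(x)=0$. Both get pointwise convergence from $\varphi_t(x)=\varepsilon(\varphi_t\ast x)$ together with Corollary \ref{cor:norm_convergence_for_bai}, and conclude with a finite $\delta$-net.

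The only slip is a citation. The identity $(\varepsilon\otimes\varphi_t)\Delta(x_j)=\varphi_t(x_j)$ uses the left counit identity $(\varepsilon\otimes\mathrm{id})\Delta=\mathrm{id}$, which is what the paper uses, not the $(\mathrm{id}\otimes\varepsilon)\Delta=\mathrm{id}$ you cite. Both identities hold on $C(\mathbb{G})$ by continuity of $\varepsilon$ in the co-amenable case, so nothing breaks.
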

\begin{proof}
(i) To see that $\|\cdot\|_{\ast}$ is well-defined (i.e., that it is finite), first note that as $L_{\mathbb{G}}$ is a Lip-norm on $C(\mathbb{G})$, by \cite[Theorem 1.8]{rieffel} there exists some $M > 0$ such that $\inf\limits_{\lambda \in \mathbb{C}}\|x-\lambda1_{C(\mathbb{G})}\| \leq M$ for $x \in C(\mathbb{G})$ with $L_{\mathbb{G}}(x) \leq 1$. For $\epsilon > 0$ and $x \in K$, pick $\lambda_{\epsilon} \in \mathbb{C}$ such that $\|x-\lambda_{\epsilon}1_{C(\mathbb{G})}\| \leq M+\epsilon$. As $\varepsilon$ is multiplicative on $C(\mathbb{G})$, then $\varepsilon(1_{C(\mathbb{G})}) = 1$ and so
\begin{gather*}
    0 = \varepsilon(x) = \varepsilon(x-\lambda_{\epsilon}1_{C(\mathbb{G})})+\varepsilon(\lambda_{\epsilon}1_{C(\mathbb{G})}) = \varepsilon(x-\lambda_{\epsilon}
    0 = \varepsilon(x) = \varepsilon(x-\lambda_{\epsilon}1_{C(\mathbb{G})})+\lambda_{\epsilon},
\end{gather*}
\noindent and so $\lambda_{\epsilon} = -\varepsilon(x-\lambda_{\epsilon}1_{C(\mathbb{G})})$. Additionally, by co-amenability as $\varepsilon$ is norm bounded we have
\begin{gather*}
    |\lambda_{\epsilon}| = |\varepsilon(x-\lambda_{\epsilon}1_{C(\mathbb{G})})| \leq \|\varepsilon\|\|x-\lambda_{\epsilon}1_{C(\mathbb{G})}\|.
\end{gather*}
\noindent Thus, 
\begin{gather*}
    \|x\| \leq \|x-\lambda_{\epsilon}1_{C(\mathbb{G})}\|+|\lambda_{\epsilon}| \leq (1+\|\varepsilon\|)\|x-\lambda_{\epsilon}1_{C(\mathbb{G})}\| \leq (1+\|\varepsilon\|)(M+\epsilon).
\end{gather*}
\noindent As our choice of $\epsilon > 0$ was arbitrary, this shows that the set $K$ is norm bounded in $C(\mathbb{G})$. Thus, $\|\cdot\|_{\ast}$ is finite. To see why it is a semi-norm, let $\eta, \omega \in C(\mathbb{G})^{*}$ and $\lambda \in \mathbb{C}$. Then
\begin{eqnarray*}
    \|\eta+\lambda\omega\|_{\ast} 
    & = &
    \sup\{|\eta(x)+\lambda\omega(x)|: \; x \in K\} \\
    & \leq &
    \sup\{|\eta(x)|+|\lambda||\omega(x)|: \; x \in K\} \\
    & \leq & 
    \sup\{|\eta(x)|: \; x \in K\}+|\lambda|\sup\{|\omega(y)|: \; y \in K\} \\
    & = &
    \|\eta\|_{\ast}+|\lambda|\|\omega\|_{\ast},
\end{eqnarray*}
\noindent establishing our claim. 

(ii) Recall that, as $\mathbb{G}$ is co-amenable we have an injective embedding $L^{1}(\mathbb{G}) \hookrightarrow C(\mathbb{G})^{*}$; this allows us to compute $\|\varphi_{t}\|_{\ast}$ considered inside $C(\mathbb{G})^{*}$. To establish (ii), we first show that $K$ is totally norm bounded in $C(\mathbb{G})$. To that end, note that in the proof of (i) we show that if $q: C(\mathbb{G})\rightarrow C(\mathbb{G})/\mathbb{C}$ denotes the canonical quotient map from $C(\mathbb{G})$ to the quotient Banach space $C(\mathbb{G})/\mathbb{C}$, then $\|x\| \leq (1+\|\varepsilon\|)\|q(x)\|$ for $x \in K$. Furthermore, if $y \in K$, then $\varepsilon(x-y) = \varepsilon(x)-\varepsilon(y) = 0$ and so $\|x-y\| \leq (1+\|\varepsilon\|)\|q(x-y)\|$ as well. As mentioned, by \cite[Theorem 1.8]{rieffel} the set $q(B_{1}^{L_{\mathbb{G}}}(0))$ is totally bounded in $C(\mathbb{G})/\mathbb{C}$ under the quotient norm. Since $K \subseteq B_{1}^{L_{\bb{G}}}(0)$, $q(K)$ is also totally bounded in $C(\mathbb{G})/\mathbb{C}$. Thus, for $\delta > 0$ pick $x_{1}, \hdots, x_{n} \in K$ such that
\begin{gather*}
    q(K) \subseteq \bigcup\limits_{j=1}^{n}\mathcal{B}_{\bigg(\frac{\delta}{1+\|\varepsilon\|}\bigg)}^{C(\mathbb{G})/\mathbb{C}}\bigg(q(x_{j})\bigg).
\end{gather*}
\noindent For any $x \in K$, there exists some $j \in [n]$ such that $\|q(x-x_{j})\| \leq \frac{\delta}{1+\|\varepsilon\|}$. Then by our previous inequality, we have
\begin{gather*}
    \|x-x_{j}\|\leq (1+\|\varepsilon\|)\|q(x-x_{j})\| \leq \delta.
\end{gather*}
\noindent As our choice of $x \in K$ was arbitrary, this implies $K$ is totally norm bounded as claimed. 

Now, using the identity $(\varepsilon \otimes {\rm id}_{C(\mathbb{G})})\Delta = {\rm id}_{C(\mathbb{G})}$, we have
\begin{gather*}
    \varphi_{t}(x) = \varphi_{t}({\rm id}_{C(\mathbb{G})}(x)) = \varphi_{t}\circ (\varepsilon\otimes {\rm id}_{C(\mathbb{G})})\Delta(x) = (\varepsilon\otimes \varphi_{t})\Delta(x) = \varepsilon(\varphi_{t}\star x).
\end{gather*}
\noindent By Corollary \ref{cor:norm_convergence_for_bai} we have that $\varphi_{t}\star x \rightarrow x$ in norm for any $x \in C(\mathbb{G})$. As $\varepsilon$ is norm bounded, this yields $\varphi_{t}(x) = \varepsilon(\varphi_{t}\star x)\rightarrow \varepsilon(x)$ for any $x \in C(\mathbb{G})$ as $t\rightarrow \infty$. Thus, $(\varphi_{t}-\varepsilon)(x) \rightarrow 0$ as $t\rightarrow \infty$. As $\{\varphi_{t}\}_{t > 0}$ is an $L^{1}(\mathbb{G})$-approximate identity, $\sup\limits_{t}\|\varphi_{t}\|_{L^{1}(\mathbb{G})} = 1$; let $\hat{M} := 1+\|\varepsilon\|$. Fix arbitrary $\epsilon > 0$, set $\delta = \epsilon/2\hat{M}$, and pick elements $x_{1}, \hdots, x_{n} \in K$ corresponding to the finite $\delta$-net (which we can do, as we have shown $K$ is totally norm bounded).

For each $j \in [n]$, we have that $(\varphi_{t}-\varepsilon)(x_{j})\rightarrow 0$ as $t\rightarrow \infty$. This means (as we have a finite number of $x_{j}$'s for our $\delta$-net) there exists some $t_{0} \geq 0$ such that if $t \geq t_{0}$, then $|(\varphi_{t}-\varepsilon)(x_{j})| < \epsilon/2$ for all $j \in [n]$. Pick any $x \in K$; we have some $j \in [n]$ such that $\|x-x_{j}\| < \delta$. Now, 
\begin{eqnarray*}
    |(\varphi_{t}-\varepsilon)(x)| 
    & \leq &
    |(\varphi_{t}-\varepsilon)(x-x_{j})|+|(\varphi_{t}-\varepsilon)(x_{j})| \\
    & \leq &
    \hat{M}\|x-x_{j}\|+\frac{\epsilon}{2} \\
    & < &
    \hat{M}\bigg(\frac{\epsilon}{2\hat{M}}\bigg)+\frac{\epsilon}{2} \\
    & = & 
    \epsilon,
\end{eqnarray*}
\noindent for $t \geq t_{0}$. This implies $\sup\limits_{x \in K}|(\varphi_{t}-\varepsilon)(x)|\rightarrow 0$ as $t\rightarrow \infty$. However, 
\begin{gather*}
    \sup\limits_{x \in K}|(\varphi_{t}-\varepsilon)(x)| = \sup\{|(\varphi_{t}-\varepsilon)(x)|: \; x \in K\} = \|(\varphi_{t}-\varepsilon)\|_{\ast}.
\end{gather*}
\noindent Thus, $\|(\varphi_{t}-\varepsilon)\|_{\ast}\rightarrow 0$ as $t\rightarrow \infty$, as claimed. 
\end{proof}

Now, and in the remainder of this section, assume that $\{\varphi_{t}\}_{t > 0}$ is not just a bounded approximate identity but an $L^{1}(\mathbb{G})$-approximate identity. For fixed $t > 0$, set 
\begin{gather*}
    \mathcal{A}_{t} := \{\varphi_{t}\star a: \; a \in \mathcal{A}\},
\end{gather*}
\noindent considered as the vector space image of $\mathcal{A}$ under convolution by some fixed $\varphi_{t}$. Additionally, define $L_{\cl{A}_{t}}: \mathcal{A}_{t}\rightarrow [0, \infty]$ by
\begin{gather*}
    L_{\mathcal{A}_{t}}(\hat{a}) := L_{\mathcal{A}}(\hat{a}), \;\;\;\; \hat{a} \in \mathcal{A}_{t}.
\end{gather*}
\begin{proposition}\label{prop:subspace_is_operator_system}
The space $\mathcal{A}_{t}$ is an operator system in $\mathcal{B}(L^{2}(\mathbb{G}))$. 
\end{proposition}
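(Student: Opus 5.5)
Since $\mathcal{A}_{t}$ is defined as the image of $\mathcal{A}$ under the single map $\Phi_{t}:=({\rm id}_{\mathcal{A}}\otimes\varphi_{t})\alpha$, the plan is to read off the three operator system axioms (linear subspace, contains the unit, self-adjoint) from properties of $\Phi_{t}$, and then settle closedness and the ambient Hilbert space separately. We regard $\mathcal{A}$ as concretely represented, and $\mathcal{A}_{t}\subseteq\mathcal{A}$ inherits that representation. (In the setting the statement refers to, $\mathcal{A}$ sits inside $\mathcal{B}(L^{2}(\mathbb{G}))$, as in Remark \ref{rem:function_op_conv}; in general one fixes a faithful representation.)

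First, $\mathcal{A}_{t}$ is a linear subspace, because $a\mapsto\varphi_{t}\star a$ is linear. Second, it contains the unit: $\alpha$ is unital, so
\[
\varphi_{t}\star 1_{\mathcal{A}}=({\rm id}_{\mathcal{A}}\otimes\varphi_{t})(1_{\mathcal{A}}\otimes 1_{C(\mathbb{G})})=\varphi_{t}(1_{C(\mathbb{G})})1_{\mathcal{A}}=1_{\mathcal{A}},
\]
using Definition \ref{def:l_1_approx}(ii). Third, it is self-adjoint. By Proposition \ref{prop:properties_of_convolution}(i), $\Phi_{t}$ is a positive map, since $\varphi_{t}\geq0$; in fact it is completely positive. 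Positive maps are $*$-preserving by the decomposition of self-adjoint elements recalled in Section \ref{sec:preliminaries}. Hence $(\varphi_{t}\star a)^{*}=\varphi_{t}\star a^{*}\in\mathcal{A}_{t}$. Together these show $\mathcal{A}_{t}$ is a unital self-adjoint subspace.

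The main obstacle is norm-closedness, which is part of the definition of an operator system. The image of a bounded map need not be closed, and $\Phi_{t}$ is a contraction (since $\|\varphi_{t}\|=1$) but not obviously of closed range. I would try two routes, in this order.

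\begin{enumerate}
\item Show $\Phi_{t}$ has closed range directly. In the motivating examples $\varphi_{t}$ is supported on finitely many isotypic components, e.g.\ $\varphi_{t}=x_{t}\cdot\varphi_{\mathbb{G}}$ with $x_{t}\in\mathcal{O}(\mathbb{G})$. Then $\varphi_{t}\star a$ lies in the finite sum $\bigoplus_{\pi\in F}\mathcal{A}^{\pi}$ of spectral subspaces, using the spectral projections $E_{\pi}$ from the proof of Lemma \ref{lem:norm_density_of_convolution_in_a}. For ergodic actions each $\mathcal{A}^{\pi}$ is finite dimensional, so $\mathcal{A}_{t}$ is finite dimensional and automatically closed.
\item Otherwise, replace $\mathcal{A}_{t}$ by its norm closure $\overline{\mathcal{A}_{t}}$. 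Unit and adjoints pass to the closure by continuity of the involution, so $\overline{\mathcal{A}_{t}}$ is an operator system. Note that $L_{\mathcal{A}_{t}}$ is just the restriction of $L_{\mathcal{A}}$, so this replacement does not change the Lip-norm construction used afterwards.
\end{enumerate}

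I expect the intended reading is the first route, or else that ``operator system'' is used loosely for a unital self-adjoint subspace, and I would state explicitly which of the two is being used.
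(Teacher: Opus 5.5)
Your check that $\mathcal{A}_{t}$ is a unital self-adjoint subspace is the paper's argument. The paper notes that $a\mapsto\varphi_{t}\star a$ is completely positive (via the proof of Proposition~\ref{prop:properties_of_convolution}(i)) and unital (since $\varphi_{t}(1_{C(\mathbb{G})})=1$ and $\alpha$ is unital). It then simply asserts that the image of an operator system under a UCP map is an operator system. It never addresses norm-closedness, although its own definition of an operator space requires it. So the obstacle you isolate is a gap in the paper's proof, not an omission on your part.

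Your worry is in fact decisive, and route 1 cannot be extended to the general case, because closedness genuinely fails. Take $\mathbb{G}=\mathbb{T}$ acting on $\mathcal{A}=C(\mathbb{T})$ by $\Delta$, and let $\varphi_{t}$ be the heat kernel with Fourier coefficients $e^{-n^{2}/t}$. This is an $L^{1}(\mathbb{G})$-approximate identity as $t\to\infty$. Then $\varphi_{t}\star f$ has Fourier coefficients $e^{-n^{2}/t}\hat{f}(n)$, so $\mathcal{A}_{t}$ contains every trigonometric polynomial and is dense in $C(\mathbb{T})$. Yet all its elements are real-analytic, so $\mathcal{A}_{t}$ is a proper dense, hence non-closed, subspace.

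The same happens in general, and your own route 1 observation shows why. In the ergodic setting, $a\mapsto\omega\star a$ has finite rank whenever $\omega=x\cdot\varphi_{\mathbb{G}}$ with $x\in\mathcal{O}(\mathbb{G})$. Such functionals are norm-dense in $L^{1}(\mathbb{G})$, because the Haar vector is separating for $L^{\infty}(\mathbb{G})$. Since also $\|\omega\star a\|\leq\|\omega\|\,\|a\|$, every $\Phi_{t}$ is a norm limit of finite-rank maps, hence a compact operator on $\mathcal{A}$. A compact operator has closed range only if that range is finite-dimensional. Thus $\mathcal{A}_{t}$ is closed precisely when it is finite-dimensional. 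This covers the band-limited kernels of route 1, including the F\o lner kernels $k_{N}$ of Section~\ref{sec:examples}, but not the heat kernels the introduction uses as motivation.

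In general you must therefore use route 2, or the convention of \cite{paulsen} in which an operator system is just a unital self-adjoint subspace. Either choice is harmless downstream: the estimates in Propositions~\ref{prop:existence_of_gammas} and~\ref{prop:existence_of_gamma_primes} are proved for arbitrary $a\in\mathcal{A}$, so they hold verbatim on $\overline{\mathcal{A}_{t}}$.
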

\begin{proof}
The proof of Proposition \ref{prop:properties_of_convolution} (i) shows that for a fixed state $\varphi_{t} \in L^{1}(\mathbb{G})$, the mapping $a \mapsto \varphi_{t}\star a$ is completely positive. Furthermore, as $\varphi_{t}$ is a state, and $\alpha$ is unital (by definition), we have that 
\begin{gather*}
    \varphi_{t}\star 1_{\mathcal{A}} = ({\rm id}_{\mathcal{A}}\otimes \varphi_{t})\alpha(1_{\mathcal{A}}) = ({\rm id}_{\mathcal{A}}\otimes \varphi_{t})(1_{\mathcal{A}}\otimes 1_{C(\mathbb{G}}) = \varphi_{t}(1_{C(\mathbb{G})})1_{\mathcal{A}} = 1_{\mathcal{A}},
\end{gather*}
\noindent and thus the mapping
\begin{gather*}
    \mathcal{A} \rightarrow \mathcal{A}, \;\;\;\; a \mapsto \varphi_{t}\star a,
\end{gather*}
\noindent is a unital, completely positive map. As $\mathcal{A}$ is a ${\rm C}^{*}$-algebra, it is trivially an operator system; thus, $\mathcal{A}_{t}$ is the image of an operator system under a UCP map, and hence an operator system as claimed. 
\end{proof}

\begin{proposition}\label{prop:spaces_are_cqms}
For each $t > 0$, pairs $(\mathcal{A}, L_{\mathcal{A}})$ and $(\mathcal{A}_{t}, L_{\mathcal{A}_{t}})$ are compact quantum metric spaces. 
\end{proposition}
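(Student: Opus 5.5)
For $(\mathcal{A}, L_{\mathcal{A}})$ I would take the Lip-norm property from the preliminaries. The standing assumptions are that $\mathbb{G}$ is co-amenable and separable, $L_{\mathbb{G}}$ is a bi-invariant regular Lip-norm, and the action is ergodic, as assumed when (\ref{eqn:induced_lip_norm_on_a}) was introduced. Under these, the result recorded from \cite{Li09} says $L_{\mathcal{A}}$ is a regular Lip-norm on $\mathcal{A}$. I would briefly recheck conditions (i)--(iii) against the operator-system definition used here. Density of ${\rm dom}(L_{\mathcal{A}})$ follows from regularity, since $\mathcal{O}_{\mathbb{G}}(\mathcal{A})$ is dense (Lemma \ref{lem:norm_density_of_convolution_in_a}). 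The kernel is $\mathbb{C}1$ by ergodicity. The identity $L_{\mathcal{A}}(a^*)=L_{\mathcal{A}}(a)$ holds because $a^*\star\phi=(a\star\phi^*)^*$ and $\phi^*=\phi$ for states. If Li's framework is the order-unit one, I would pass between the two settings via Remark \ref{rem:aou_vs_op_system}.

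For $(\mathcal{A}_t, L_{\mathcal{A}_t})$, Proposition \ref{prop:subspace_is_operator_system} already gives that $\mathcal{A}_t$ is an operator system. I would then verify (i)--(iii) and (iv') for the restriction of $L_{\mathcal{A}}$.

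Condition (iv') and (ii) are inherited. Since $\mathcal{A}_t\subseteq\mathcal{A}$, the unit ball $B_1^{L_{\mathcal{A}_t}}(0)$ sits inside $B_1^{L_{\mathcal{A}}}(0)$. So its image in $\mathcal{A}/\mathbb{C}$ is totally bounded. The quotient norm on $\mathcal{A}_t/\mathbb{C}$ coincides with that on $\mathcal{A}/\mathbb{C}$, because $\mathbb{C}1\subseteq\mathcal{A}_t$. Hence total boundedness holds in $\mathcal{A}_t/\mathbb{C}$ too; a slight care is needed to take the net points inside $\mathcal{A}_t$, handled by the standard $2\delta$ adjustment. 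The kernel condition (ii) is also immediate: $\ker L_{\mathcal{A}_t}=\ker L_{\mathcal{A}}\cap\mathcal{A}_t=\mathbb{C}1$.

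Condition (iii) follows from $\varphi_t\star a^*=(\varphi_t\star a)^*$, which holds because $a\mapsto\varphi_t\star a$ is positive and hence $*$-preserving. This shows $\mathcal{A}_t$ is self-adjoint and the restricted seminorm is $*$-invariant.

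The main obstacle is density (i): I need ${\rm dom}(L_{\mathcal{A}})\cap\mathcal{A}_t$ to be norm-dense in $\mathcal{A}_t$. I would use that $a\mapsto\varphi_t\star a$ is norm-contractive, being UCP, so $\varphi_t\star({\rm dom}\,L_{\mathcal{A}})$ is dense in $\mathcal{A}_t$. Then I would show $L_{\mathcal{A}}(\varphi_t\star a)\le L_{\mathcal{A}}(a)$. For self-adjoint $a$ this is the $\alpha$-invariance of $L_{\mathcal{A}}$ stated earlier, since $\varphi_t$ is a state. For general $a$ I would split into ${\rm Re}$ and ${\rm Im}$ parts, using that $\varphi_t\star$ commutes with these, to get $L_{\mathcal{A}}(\varphi_t\star a)\le 2L_{\mathcal{A}}(a)<\infty$. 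Alternatively, I would argue directly that $(\varphi_t\star a)\star\phi=\varphi_t$-translate of $a\star\phi$ and apply bi-invariance of $L_{\mathcal{A}}$. Either route gives density, completing the proof.
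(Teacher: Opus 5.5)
Your proposal is correct. For $(\mathcal{A},L_{\mathcal{A}})$ and for conditions (ii), (iii) and (iv$'$) on $\mathcal{A}_t$ it follows the paper: cite \cite{Li09}, note $B_1^{L_{\mathcal{A}_t}}(0)\subseteq B_1^{L_{\mathcal{A}}}(0)$, and invoke \cite[Theorem 1.8]{rieffel}. The quotient norms agree because $1\in\mathcal{A}_t$.

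Where you genuinely differ is the density condition (i). The paper argues only that ${\rm dom}(L_{\mathcal{A}})$ is dense in $\mathcal{A}$ and $\mathcal{A}_t\subseteq\mathcal{A}$, hence ${\rm dom}(L_{\mathcal{A}})\cap\mathcal{A}_t$ is dense in $\mathcal{A}_t$. That inference fails in general. For example, the kernel of a discontinuous linear functional $f$ is dense but meets $\mathbb{C}v$ only in $0$ whenever $f(v)\neq 0$.

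Your argument supplies the missing structure. $\Phi_t=\varphi_t\star(\cdot)$ is a norm contraction onto $\mathcal{A}_t$, so $\Phi_t({\rm dom}\,L_{\mathcal{A}})$ is dense in $\mathcal{A}_t$. The $\alpha$-invariance $L_{\mathcal{A}}(\mu\star a)\le L_{\mathcal{A}}(a)$, for $a\in\mathcal{A}_h$ and $\mu\in S(C(\mathbb{G}))$, applied to the state $\varphi_t$ with the ${\rm Re}/{\rm Im}$ splitting, gives $\Phi_t({\rm dom}\,L_{\mathcal{A}})\subseteq{\rm dom}(L_{\mathcal{A}})\cap\mathcal{A}_t$. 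Your route costs one extra invariance step but makes the argument complete. It also yields a Lip-norm bound for $\Phi_t$ on self-adjoint elements. That is the kind of estimate $C^1$-contractivity in Theorem \ref{thm_approx_order_iso} actually requires, whereas the paper only cites the norm bound $\|\varphi_t\star a\|\le\|a\|$.

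Two small slips:
\begin{itemize}
\item In your alternative route, the relevant identity is $(\varphi_t\star a)\star\phi=\varphi_t\ast(a\star\phi)$. The property used is right-invariance of $L_{\mathbb{G}}$, not bi-invariance of $L_{\mathcal{A}}$; this is exactly how the $\alpha$-invariance of $L_{\mathcal{A}}$ is proved.
\item The claim that $\ker L_{\mathcal{A}}=\mathbb{C}1$ follows by ergodicity needs a counit step. If $a\star\phi\in\mathbb{C}1$ for all states $\phi$, applying $\varepsilon$ and Remark \ref{rem:co-unit_identity} gives $a\star\phi=\phi(a)1$, hence $\alpha(a)=a\otimes 1$. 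In any case this is part of what is quoted from \cite{Li09}.
\end{itemize}
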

\begin{proof}
That $(\mathcal{A}, L_{\mathcal{A}})$ is a compact quantum metric space follows from the results of \cite{Li09}, as discussed in the comments surrounding (\ref{eqn:induced_lip_norm_on_a}). To see why $(\cl{A}_{t}, L_{\cl{A}_{t}})$ is a compact quantum metric space, note that by Proposition \ref{prop:subspace_is_operator_system} the subspace $\mathcal{A}_{t}$ is an operator system. As ${\rm dom}(L_{\mathcal{A}})$ is dense in $\mathcal{A}$, with $\mathcal{A}_{t} \subseteq \mathcal{A}$, then ${\rm dom}(L_{\mathcal{A}_{t}}) = {\rm dom}(L_{\mathcal{A}}) \cap \mathcal{A}_{t}$ is dense in $\mathcal{A}_{t}$. As $\mathcal{A}_{t}$ shares the same unit as $\mathcal{A}$, by properties of $L_{\mathcal{A}}$ as a Lip-norm we have that $L_{\mathcal{A}_{t}}(\hat{a}^{*})= L_{\mathcal{A}_{t}}(\hat{a})$ for every $\hat{a} \in \mathcal{A}_{t}$ and $\mathbb{C}1_{\mathcal{A}} = \ker(L_{\mathcal{A}_{t}})$. Thus, it suffices to show that induced metric $d^{L_{\mathcal{A}_{t}}}$ metrizes the ${\rm weak}^{*}$-topology on $S(\mathcal{A}_{t})$. As
\begin{gather*}
    B_{1}^{L_{\cl{A}_{t}}}(0) = \{\hat{a} \in \mathcal{A}_{t}: \; L_{\mathcal{A}_{t}}(\hat{a}) \leq 1\} \subseteq \{a \in \mathcal{A}: \; L_{\mathcal{A}}(a) \leq 1\} = B_{1}^{L_{\cl{A}}}(0),
\end{gather*}
\noindent and the latter is totally norm bounded in $\mathcal{A}/\mathbb{C}$, then the former is as well. By \cite[Theorem 1.8]{rieffel}, this shows $d^{L_{\mathcal{A}_{t}}}$ metrizes the ${\rm weak}^{*}$-topology on $S(\mathcal{A}_{t})$ as desired. Thus, $(\mathcal{A}_{t}, L_{\mathcal{A}_{t}})$ is a compact quantum metric space by definition.
\end{proof}

Define the linear maps $\Phi_{t} : \mathcal{A}\rightarrow \mathcal{A}_{t}$ and $\Psi_{t}: \mathcal{A}_{t}\rightarrow \mathcal{A}$ as follows:
\begin{gather*}
    \Phi_{t}(a) := \varphi_{t}\star a, \;\;\;\; a \in \mathcal{A},
    \;\;\;\;\;\;\;\; \Psi_{t}(\hat{a}) := \iota(\hat{a}), \;\;\;\; \hat{a} \in \mathcal{A}_{t},
\end{gather*}
\noindent where $\mathcal{A}_{t} \hookrightarrow \mathcal{A}$ is the completely isometric inclusion of $\mathcal{A}_{t}$ inside $\mathcal{A}$. 

\begin{proposition}\label{prop:existence_of_gammas}
For $t > 0$ and $a \in \mathcal{A}$, we have
\begin{gather*}
    \|\Psi_{t}\circ \Phi_{t}(a)-a\| \leq \gamma_{t}L_{A}(a),
\end{gather*}
\noindent where $\gamma_{t} > 0$ such that $\gamma_{t}\rightarrow 0$ as $t \rightarrow \infty$. 
\end{proposition}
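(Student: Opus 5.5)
We will take $\gamma_{t} := \|\varphi_{t}-\varepsilon\|_{\ast}$, the seminorm from (\ref{eqn:seminorm}); Lemma \ref{lem:seminorm_convergence} (ii) gives $\gamma_{t}\rightarrow 0$. If some $\gamma_t$ vanishes we replace it by $\gamma_t+1/t$ to keep it strictly positive. Since $\Psi_{t}$ is the inclusion, the quantity to bound is $\|\varphi_{t}\star a - a\|$. By Remark \ref{rem:co-unit_identity}, $a = ({\rm id}_{\cl{A}}\otimes\varepsilon)\alpha(a) = \varepsilon\star a$. Hence
\begin{gather*}
\varphi_{t}\star a - a = ({\rm id}_{\cl{A}}\otimes(\varphi_{t}-\varepsilon))\alpha(a).
\end{gather*}

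To estimate the norm of this element of $\cl{A}$, we test it against states. For $\phi\in S(\cl{A})$ the slice-map identities give
\begin{gather*}
\phi(\varphi_{t}\star a - a) = (\varphi_{t}-\varepsilon)\big((\phi\otimes{\rm id})\alpha(a)\big) = (\varphi_{t}-\varepsilon)(a\star\phi).
\end{gather*}
First take $a\in\cl{A}_{h}$ with $L_{\cl{A}}(a)<\infty$. The element $x:=a\star\phi$ satisfies $L_{\bb{G}}(x)\leq L_{\cl{A}}(a)$ by (\ref{eqn:induced_lip_norm_on_a}). Let $y := x-\varepsilon(x)1$. Then $\varepsilon(y)=0$ and $L_{\bb{G}}(y)=L_{\bb{G}}(x)$, since the unit lies in the kernel. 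Because $\varphi_{t}(1)=\varepsilon(1)=1$, we have $(\varphi_{t}-\varepsilon)(x)=(\varphi_{t}-\varepsilon)(y)$. Rescaling $y$ by $L_{\cl{A}}(a)$ places it in $K$ (when $L_{\cl{A}}(a)=0$, $a$ is scalar and both sides vanish). This yields
\begin{gather*}
|\phi(\varphi_{t}\star a - a)|\leq \|\varphi_{t}-\varepsilon\|_{\ast}\, L_{\cl{A}}(a).
\end{gather*}
The element $\varphi_{t}\star a-a$ is self-adjoint, because positive maps preserve adjoints. Its norm is therefore the supremum over states of $|\phi(\cdot)|$, which gives $\|\varphi_t\star a-a\|\le\gamma_t L_{\cl A}(a)$ for self-adjoint $a$.

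For general $a$ we split $a=\mathrm{Re}(a)+i\,\mathrm{Im}(a)$. We need $L_{\cl{A}}(\mathrm{Re}\,a),L_{\cl{A}}(\mathrm{Im}\,a)\leq L_{\cl{A}}(a)$, which follows from $L(a^{*})=L(a)$ and the triangle inequality. This gives the bound with constant $2\gamma_{t}$, and we rename $2\gamma_{t}$ as $\gamma_{t}$. The case $L_{\cl{A}}(a)=\infty$ is trivial.

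The main obstacle is making the identity $\phi\circ({\rm id}\otimes\omega)=\omega\circ(\phi\otimes{\rm id})$ rigorous on the minimal tensor product when $\omega=\varphi_{t}-\varepsilon$ is merely bounded rather than positive. We handle this by linearity: $\varphi_{t}$ and $\varepsilon$ are each states on $C(\bb{G})$, with $\varepsilon$ a character by co-amenability, so the completely positive slice identity quoted in Section \ref{ss:hopf_alg_cqg} applies to each separately. A second, smaller check is that $a\star\phi$ and the inequality $L_{\bb{G}}(a\star\phi)\le L_{\cl A}(a)$ are used only where the definition (\ref{eqn:induced_lip_norm_on_a}) and Li's results apply. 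If those are stated only for self-adjoint elements, this is why we reduce to $\cl{A}_h$ first.
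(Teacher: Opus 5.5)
Your proposal is correct and follows essentially the same route as the paper: write $\varphi_t\star a - a = (\varphi_t-\varepsilon)\star a$ via Remark~\ref{rem:co-unit_identity}, then bound $|(\varphi_t-\varepsilon)(a\star\phi)|$ by $\|\varphi_t-\varepsilon\|_{\ast}\,L_{\mathbb{G}}(a\star\phi)$ after subtracting $\varepsilon(\cdot)1$ and rescaling into $K$, take the supremum over states for self-adjoint $a$, and pass to general $a$ through real and imaginary parts with constant $2\|\varphi_t-\varepsilon\|_{\ast}$. Your refinements tighten the paper's argument without changing it: you derive $L_{\mathcal{A}}(\mathrm{Re}\,a), L_{\mathcal{A}}(\mathrm{Im}\,a)\le L_{\mathcal{A}}(a)$ from $*$-invariance and the triangle inequality instead of the $\theta$-formula of Remark~\ref{rem:aou_vs_op_system}, you justify the slice identity for $\varphi_t-\varepsilon$ by linearity, and you enforce $\gamma_t>0$.
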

\begin{proof}
For ease of notation, let $1$ denote the unit of $C(\mathbb{G})$. Note that for $t >0$, by definition $\Psi_{t}\circ \Phi_{t}(a) = \varphi_{t}\star a = ({\rm id}_{\cl{A}}\otimes \varphi_{t})\alpha(a)$. As $\mathbb{G}$ acts on $\mathcal{A}$ ergodically and $\mathbb{G}$ is co-amenable, $\varepsilon: C(\mathbb{G})\rightarrow \mathbb{C}$ is bounded with $\varepsilon\star a = ({\rm id}_{\cl{A}}\otimes \varepsilon)\alpha(a) = a$ (by Remark \ref{rem:co-unit_identity}). Then
\begin{gather*}
    \varphi_{t}\star a-a = \varphi_{t}\star a-\varepsilon\star a = (\varphi_{t}-\varepsilon)\star a = ({\rm id}_{\cl{A}}\otimes (\varphi_{t}-\varepsilon))\alpha(a).
\end{gather*}
\noindent Set $\eta_{t} := \varphi_{t}-\varepsilon$. We claim that for $x \in C(\mathbb{G})$, $|\eta_{t}(x)| \leq \|\eta_{t}\|_{\ast}L_{\mathbb{G}}(x)$. Define $K$ as in (\ref{eqn:set_k}). We note that as $\varphi_{t}(1) = 1 = \varepsilon(1)$, then $\eta_{t}(1) = 0$. If $x \in C(\mathbb{G})$ with $\varepsilon(x) \neq 0$, set $\hat{x} := x-\varepsilon(x)1$. Then 
\begin{gather*}
    \varepsilon(\hat{x}) = \varepsilon(x-\varepsilon(x)1) = \varepsilon(x)-\varepsilon(x) = 0, \\\eta_{t}(\hat{x}) = \eta_{t}(x-\varepsilon(x)1)= \eta_{t}(x)-\varepsilon(x)\eta_{t}(1) = \eta_{t}(x),
\end{gather*}
\noindent and as $\mathbb{C}1 = \ker(L_{\mathbb{G}})$ (by definition of a Lip-norm on $\mathbb{G}$) we have
\begin{gather*}
    L_{\mathbb{G}}(\hat{x}) = L_{\mathbb{G}}(x-\varepsilon(x)1) = L_{\mathbb{G}}(x)+L_{\mathbb{G}}(\varepsilon(x)1) = L_{\mathbb{G}}(x).
\end{gather*}
\noindent Thus, without loss of generality we assume we are working with $x \in C(\mathbb{G})$ such that $\varepsilon(x) = 0$. If $L_{\mathbb{G}}(x) = 0$, then the claimed inequality is trivial as $x = \lambda 1$ for some $\lambda \in \mathbb{C}$, with $\eta_{t}(x) = 0$. If $L_{\mathbb{G}}(x) > 0$, set $y: = x/L_{\mathbb{G}}(x)$ so that $\varepsilon(y) = 0$ with $L_{\mathbb{G}}(y) = 1$. Then $y \in K$, and so $|\eta_{t}(y)| \leq \|\eta_{t}\|_{\ast}$; thus, $|\eta_{t}(x)| \leq \|\eta_{t}\|_{\ast}L_{\mathbb{G}}(x)$. Therefore, $|\eta_{t}(x)| \leq \|\eta_{t}\|_{\ast}L_{\mathbb{G}}(x)$ for all $x \in C(\mathbb{G})$ as claimed. 

For $\phi \in S(\cl{A})$, by properties of slice maps (see our comments in Section \ref{ss:hopf_alg_cqg}) we have
\begin{eqnarray*}
    \phi(\varphi_{t}\star a-a) 
    & = &
    \phi(\eta_{t}\star a) \\
    & = &
    \phi(({\rm id}_{\cl{A}}\otimes \eta_{t})\alpha(a)) \\
    & = &
    \eta_{t}((\phi\otimes {\rm id}_{C(\mathbb{G})})\alpha(a)) \\
    & = &
    \eta_{t}(a\star \phi).
\end{eqnarray*}

Now, assume for the moment that $a \in \cl{A}_{h}$. As $\Psi_{t}, \Phi_{t}$ are UCP maps (as shown in the proof of Proposition \ref{prop:subspace_is_operator_system} and surrounding comments), the element $\Psi_{t}\circ \Phi_{t}(a)$ is the image of a self-adjoint element under a UCP map and hence must be self-adjoint. Therefore, $\Psi_{t}\circ\Phi_{t}(a)-a$ is self-adjoint in $\cl{A}$, and so by our previous equality 
\begin{gather*}
    \|\Psi_{t}\circ\Phi_{t}(a)-a\| = \|\eta_{t}\star a\| = \sup\limits_{\phi \in S(\cl{A})}|\phi(\eta_{t}\star a)| = \sup\limits_{\phi \in S(\cl{A})}|\eta_{t}(a\star \phi)|.
\end{gather*}
\noindent As $a \star \phi \in C(\mathbb{G})$ for each $\phi \in S(\cl{A})$, we have that $|\eta_{t}(a\star \phi)| \leq \|\eta_{t}\|_{\ast}L_{\mathbb{G}}(a\star \phi)$ for every $\phi \in S(\cl{A})$ and $t > 0$. By definition of the induced Lip-norm on $\cl{A}$, we then see
\begin{gather*}
    \sup\limits_{\phi \in S(\cl{A})}|\eta_{t}(a\star \phi)| \leq \|\eta_{t}\|_{\ast}\bigg(\sup\limits_{\phi \in S(\cl{A})}L_{\mathbb{G}}(a\star\phi)\bigg) = \|\eta_{t}\|_{\ast}L_{A}(a).
\end{gather*}
\noindent Now, for arbitrary (non self-adjoint) $a \in \cl{A}$, by Remark \ref{rem:aou_vs_op_system} we have
\begin{gather*}
    L_{\cl{A}}(a) = \sup\limits_{\theta \in [0, 2\pi]}L_{\cl{A}}(\cos(\theta) {\rm Re}(a)+\sin(\theta) {\rm Im}(a)),
\end{gather*}
\noindent where ${\rm Re}(a), {\rm Im}(a) \in \cl{A}_{h}$. Setting $\theta = 0$ or $\theta = \pi/2$ shows that $L_{\cl{A}}({\rm Re}(a)), L_{\cl{A}}({\rm Im}(a)) \leq L_{\cl{A}}(a)$. Thus, 
\begin{eqnarray*}
    \|\Psi_{t}\circ \Phi_{t}(a)-a\|
    & = &
    \|(\Psi_{t}\circ\Phi_{t}({\rm Re}(a))-{\rm Re}(a))+i(\Psi_{t}\circ\Phi_{t}({\rm Im}(a))-{\rm Im}(a))\| \\
    & \leq &
    \|\Psi_{t}\circ\Phi_{t}({\rm Re}(a))-{\rm Re}(a)\|+\|\Psi_{t}\circ\Phi_{t}({\rm Im}(a))-{\rm Im}(a)\| \\
    & \leq &
    \|\eta_{t}\|_{\ast}L_{\cl{A}}({\rm Re}(a))+\|\eta_{t}\|_{\ast}L_{\cl{A}}({\rm Im}(a)) \\
    & \leq &
    2\|\eta_{t}\|_{\ast}L_{\cl{A}}(a).
\end{eqnarray*}
\noindent Setting $\gamma_{t} := 2\|\eta_{t}\|_{\ast}$, an application of Lemma \ref{lem:seminorm_convergence} concludes the proof. 

\end{proof}

\begin{proposition}\label{prop:existence_of_gamma_primes}
For $t > 0$ and $\hat{a} \in \mathcal{A}_{t}$, we have
\begin{gather*}
    \|\Phi_{t}\circ \Psi_{t}(\hat{a})-\hat{a}\| \leq \gamma_{t}' L_{\cl{A}_{t}}(\hat{a}),
\end{gather*}
\noindent where $\gamma_{t}' > 0$ such that $\gamma_{t}' \rightarrow 0$ as $t\rightarrow \infty$. 
\end{proposition}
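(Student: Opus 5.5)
The plan is to reduce the statement to Proposition \ref{prop:existence_of_gammas}. By definition $\Psi_{t}$ is the inclusion $\iota$ of $\mathcal{A}_{t}$ into $\mathcal{A}$, so for $\hat{a} \in \mathcal{A}_{t}$ we have
\begin{gather*}
    \Phi_{t}\circ \Psi_{t}(\hat{a}) - \hat{a} = \varphi_{t}\star \hat{a} - \hat{a} = \Psi_{t}\circ\Phi_{t}(\hat{a}) - \hat{a},
\end{gather*}
where on the right $\hat{a}$ is viewed as an element of $\mathcal{A}$. The norm on $\mathcal{A}_{t}$ is inherited from $\mathcal{A}$, since $\iota$ is completely isometric. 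Also $L_{\mathcal{A}_{t}}(\hat{a}) = L_{\mathcal{A}}(\hat{a})$ by the definition of $L_{\mathcal{A}_{t}}$. Proposition \ref{prop:existence_of_gammas} applied to $a = \hat{a} \in \mathcal{A}$ then gives
\begin{gather*}
    \|\Phi_{t}\circ \Psi_{t}(\hat{a})-\hat{a}\| \leq \gamma_{t}L_{\mathcal{A}}(\hat{a}) = \gamma_{t}L_{\mathcal{A}_{t}}(\hat{a}),
\end{gather*}
with $\gamma_{t} = 2\|\varphi_{t}-\varepsilon\|_{\ast}$.

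I would therefore take $\gamma_{t}' := \gamma_{t}$. Lemma \ref{lem:seminorm_convergence} (ii) shows $\gamma_{t}'\to 0$ as $t\to\infty$. If $L_{\mathcal{A}_{t}}(\hat{a}) = \infty$ the inequality is vacuous. If $L_{\mathcal{A}_{t}}(\hat{a}) = 0$, then $\hat{a} \in \mathbb{C}1$, and $\varphi_{t}\star 1 = 1$ (shown in the proof of Proposition \ref{prop:subspace_is_operator_system}) makes both sides zero.

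The only genuine obstacle is the strict positivity $\gamma_{t}' > 0$ required in the statement, since $\|\varphi_{t}-\varepsilon\|_{\ast}$ might vanish. I would handle this by replacing $\gamma_{t}'$ with $\max\{2\|\varphi_{t}-\varepsilon\|_{\ast}, 1/t\}$. This keeps the inequality valid and still tends to zero; the same adjustment applies to $\gamma_{t}$ in Proposition \ref{prop:existence_of_gammas} if needed.

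A remaining subtlety is that $\varphi_{t}\star\hat{a}$ need not lie in $\mathcal{A}_{t}$ unless $\Phi_{t}$ is applied to elements of $\mathcal{A}$. Here it does lie in $\mathcal{A}_{t}$, because $\hat{a} \in \mathcal{A}$ and $\mathcal{A}_{t}$ is by definition the image $\varphi_{t}\star\mathcal{A}$. So the composition $\Phi_{t}\circ\Psi_{t}$ is well-defined on $\mathcal{A}_{t}$ and no further argument is needed.
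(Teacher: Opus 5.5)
Your proposal is correct and matches the paper's proof: since $\Psi_{t}$ is the inclusion, $\Phi_{t}\circ\Psi_{t}(\hat{a}) = \varphi_{t}\star\hat{a}$, and Proposition~\ref{prop:existence_of_gammas} together with $L_{\mathcal{A}_{t}}(\hat{a}) = L_{\mathcal{A}}(\hat{a})$ gives the bound with $\gamma_{t}' := \gamma_{t}$. Replacing $\gamma_{t}'$ by $\max\{2\|\varphi_{t}-\varepsilon\|_{\ast}, 1/t\}$ to guarantee $\gamma_{t}' > 0$ is a harmless refinement that the paper omits; it matters only when $\|\varphi_{t}-\varepsilon\|_{\ast} = 0$, for instance for a finite group where $\varphi_{t} = \varepsilon$ is possible.
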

\begin{proof}
By definition of $\Psi_{t}$, we see that for any $\hat{a} \in \cl{A}_{t}$, then $\Phi_{t}\circ \Psi_{t}(\hat{a}) = \varphi_{t} \star \hat{a}$. As $L_{\cl{A}_{t}}(\hat{a}) = L_{\cl{A}}(\hat{a})$, the proof of Proposition \ref{prop:existence_of_gammas} shows that
\begin{gather*}
    \|\Phi_{t}\circ\Psi_{t}(\hat{a})-\hat{a}\| = \|\varphi_{t}\star \hat{a}-\hat{a}\| \leq \gamma_{t}L_{\cl{A}}(\hat{a}) = \gamma_{t}L_{\cl{A}_{t}}(\hat{a}).
\end{gather*}
\noindent Setting $\gamma_{t}' := \gamma_{t}$ concludes the proof. 
\end{proof}

\begin{theorem}\label{thm_approx_order_iso}
Let $I = (t)_{t \in I} \subseteq \mathbb{R}$ be a net of positive elements tending to infinity. Then $(\Phi_{t}, \Psi_{t})_{t \in I}$ form a ${\rm C}^{1}$-approximate order isomorphism for $\mathcal{A}$ and $\mathcal{A}_{t}$. 
\end{theorem}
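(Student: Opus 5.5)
To show that $(\Phi_{t}, \Psi_{t})_{t \in I}$ is a ${\rm C}^{1}$-approximate order isomorphism, we check the two conditions of the definition and then verify $C^{1}$-contractivity. Condition (ii) is already done: Propositions \ref{prop:existence_of_gammas} and \ref{prop:existence_of_gamma_primes} give $\gamma_{t} = \gamma_{t}' = 2\|\varphi_{t}-\varepsilon\|_{\ast}$. By Lemma \ref{lem:seminorm_convergence}(ii) this tends to $0$ along any net $t \to \infty$. If strict positivity of $\gamma_{t}$ is required, we can replace $\gamma_{t}$ by $\gamma_{t}+1/t$, which still tends to zero and preserves the inequalities.

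For condition (i), both maps must be UCP. The proof of Proposition \ref{prop:subspace_is_operator_system} shows $\Phi_{t}$ is UCP, since $\varphi_{t}$ is a state and $\alpha$ is a unital $*$-homomorphism, so Proposition \ref{prop:properties_of_convolution}(i) applies at every matrix level. $\Psi_{t}$ is the inclusion of an operator system containing $1_{\cl{A}}$, hence UCP. Both maps are defined on all of their domains and codomains as required.

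The substantive step is $C^{1}$-contractivity. For $\Psi_{t}$ it is immediate: $L_{\cl{A}}(\iota(\hat a)) = L_{\cl{A}_{t}}(\hat a)$ by definition. For $\Phi_{t}$ we need $L_{\cl{A}}(\varphi_{t}\star a) \leq L_{\cl{A}}(a)$.

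\emph{Self-adjoint case.} For $a \in \cl{A}_{h}$ this is the $\alpha$-invariance of the induced Lip-norm recalled after (\ref{eqn:induced_lip_norm_on_a}). There it is stated for states $\mu \in S(C(\bb{G}))$, and $\varphi_{t}$, viewed through the embedding $L^{1}(\bb{G}) \hookrightarrow C(\bb{G})^{*}$, is such a state by Definition \ref{def:l_1_approx}.

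\emph{General case.} This is where I expect the main obstacle. Since $\Phi_{t}$ is positive, it commutes with $*$, so ${\rm Re}(\varphi_{t}\star a) = \varphi_{t}\star {\rm Re}(a)$ and similarly for ${\rm Im}$. Hence for each $\theta$,
\[
\cos\theta\,{\rm Re}(\varphi_{t}\star a)+\sin\theta\,{\rm Im}(\varphi_{t}\star a) = \varphi_{t}\star(\cos\theta\,{\rm Re}(a)+\sin\theta\,{\rm Im}(a)).
\]
The argument on the right is self-adjoint, so the self-adjoint case bounds its $L_{\cl{A}}$ value by $L_{\cl{A}}(\cos\theta\,{\rm Re}(a)+\sin\theta\,{\rm Im}(a))$. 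Taking the supremum over $\theta$ in the formula of Remark \ref{rem:aou_vs_op_system} gives $L_{\cl{A}}(\varphi_{t}\star a) \leq L_{\cl{A}}(a)$.

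This last step requires that $L_{\cl{A}}$ on non-self-adjoint elements is determined by its self-adjoint part via that formula, exactly as was already used in the proof of Proposition \ref{prop:existence_of_gammas}. If $L_{\cl{A}}(a) = \infty$ the inequality is trivial, so we may restrict to ${\rm Re}(a), {\rm Im}(a) \in {\rm dom}(L_{\cl{A}})$. With $C^{1}$-contractivity established for both maps, all conditions hold and the theorem follows.
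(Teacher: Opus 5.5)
Your proposal is correct. For condition (ii), the UCP property of both maps, and the $C^{1}$-contractivity of $\Psi_t$, it coincides with the paper.

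The genuine difference is in the $C^{1}$-contractivity of $\Phi_t$. The paper justifies this step with the estimate $\|\varphi_t\star a\|\le\|\varphi_t\|_{L^{1}(\bb{G})}\|a\|=\|a\|$. That bounds the ${\rm C}^{*}$-norm, whereas $C^{1}$-contractivity requires $L_{\cl{A}_t}(\varphi_t\star a)=L_{\cl{A}}(\varphi_t\star a)\le L_{\cl{A}}(a)$. Norm contractivity does not by itself control $L_{\cl{A}}$.

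Your argument supplies what is actually needed. You treat $\varphi_t$ as a state on $C(\bb{G})$, via co-amenability and Definition \ref{def:l_1_approx}, and invoke the $\alpha$-invariance of $L_{\cl{A}}$ on $\cl{A}_h$. That invariance rests on the identity $(\mu\star a)\star\phi=\mu\ast(a\star\phi)$ together with $L_{\bb{G}}(\mu\ast x)\le L_{\bb{G}}(x)$, which is part of the bi-invariance of $L_{\bb{G}}$. This is precisely where that hypothesis enters, and the paper's proof never uses it.

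You then pass to non-self-adjoint $a$ in two steps. First, the positive map $\Phi_t$ commutes with the adjoint. Second, you use the ${\rm Re}/{\rm Im}$ description of $L_{\cl{A}}$ from Remark \ref{rem:aou_vs_op_system}, the same convention the paper relies on in Proposition \ref{prop:existence_of_gammas}. So your route is not merely an alternative: it is the argument the theorem requires, and it repairs the paper's step.

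Your replacement of $\gamma_t$ by $\gamma_t+1/t$ is also a sensible safeguard. The quantity $2\|\varphi_t-\varepsilon\|_{\ast}$ can vanish, for instance when $\bb{G}$ is finite and $\varphi_t=\varepsilon$, while the definition asks for strictly positive constants.
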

\begin{proof}
We have already established that $\Phi_{t}, \Psi_{t}$ are UCP maps for each $t \in I$. It is clear that $\Psi_{t}$ is ${\rm C}^{1}$-contractive, as the inclusion map of $\mathcal{A}_{t}$ inside $\mathcal{A}$; that $\Phi_{t}$ is ${\rm C}^{1}$-contractive follows from the inequality $\|\varphi_{t}\star a\| \leq \|\varphi_{t}\|_{L^{1}(\mathbb{G})}\cdot\|a\| = \|a\|$ by our previous assumptions on each $\varphi_{t}$, for $t \in I$. An application of Propositions \ref{prop:existence_of_gammas} and \ref{prop:existence_of_gamma_primes} finishes the proof.
\end{proof}

\begin{corollary}\label{mainconv}
Compact quantum metric spaces $(\mathcal{A}_{t}, L_{\mathcal{A}_{t}})$ converge to $(\mathcal{A}, L_{\mathcal{A}})$ in the both the complete and the quantum Gromov-Hausdorff distance as $t\rightarrow \infty$. Furthermore, compact metric spaces $(S(\mathcal{A}_{t}), d^{L_{\mathcal{A}_{t}}})$ converge to $(S(\mathcal{A}), d^{L_{\mathcal{A}}})$ in the Gromov-Hausdorff distance as $t\rightarrow \infty$. 
\end{corollary}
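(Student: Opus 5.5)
The corollary should follow by feeding Theorem \ref{thm_approx_order_iso} into Proposition \ref{prop_qgh_distance_appoi} and Corollary \ref{cor_qgh_implies_gh}. The plan is to first record the standing hypotheses needed by those results. By Proposition \ref{prop:spaces_are_cqms}, $(\mathcal{A}, L_{\mathcal{A}})$ and each $(\mathcal{A}_{t}, L_{\mathcal{A}_{t}})$ are compact quantum metric spaces. By Theorem \ref{thm_approx_order_iso}, for any net $I$ of positive reals tending to infinity, the pairs $(\Phi_{t}, \Psi_{t})_{t \in I}$ form a ${\rm C}^{1}$-approximate order isomorphism. The associated constants are $\gamma_{t} = \gamma_{t}' = 2\|\varphi_{t}-\varepsilon\|_{\ast}$, and Lemma \ref{lem:seminorm_convergence}(ii) shows they tend to zero.

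Next I would apply Proposition \ref{prop_qgh_distance_appoi} along this net. Its proof gives ${\rm d}^{\rm s}((\mathcal{A}, L_{\mathcal{A}}), (\mathcal{A}_{t}, L_{\mathcal{A}_{t}})) \leq \max\{\gamma_{t}, \gamma_{t}'\} = 2\|\varphi_{t}-\varepsilon\|_{\ast}$. Since this bound holds for every $t>0$ and does not depend on the choice of net, it yields convergence as $t \to \infty$ in the full sense. Remark \ref{r_qgh_dominates_gh} then gives ${\rm d}_{\rm q} \leq {\rm d}^{\rm s}$, so convergence in the quantum Gromov-Hausdorff distance follows too. For the second statement, Corollary \ref{cor_qgh_implies_gh}, equivalently the first inequality in Remark \ref{r_qgh_dominates_gh}, bounds ${\rm d}_{\rm GH}((S(\mathcal{A}_{t}), d^{L_{\mathcal{A}_{t}}}), (S(\mathcal{A}), d^{L_{\mathcal{A}}}))$ by ${\rm d}_{\rm q}$, which tends to zero.

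The main obstacle is confirming that the hypotheses of Theorem \ref{thm_approx_order_iso} really hold, in particular the ${\rm C}^{1}$-contractivity of $\Phi_{t}$. That property concerns the Lip-norms, $L_{\mathcal{A}_{t}}(\varphi_{t}\star a) \leq L_{\mathcal{A}}(a)$, not just the operator norm. I would justify it from the $\alpha$-invariance of the induced Lip-norm, $L_{\mathcal{A}}(\mu \star a) \leq L_{\mathcal{A}}(a)$ for states $\mu$. Here $\mu = \varphi_{t}$, which is a state on $C(\mathbb{G})$ via the embedding $L^{1}(\mathbb{G}) \hookrightarrow C(\mathbb{G})^{*}$.

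This invariance is stated only for self-adjoint $a$. To extend it, I would use that $\varphi_{t}\star$ is $*$-preserving, so it commutes with ${\rm Re}$ and ${\rm Im}$, and then apply the formula for $L_{\mathcal{A}}$ from Remark \ref{rem:aou_vs_op_system} to $\cos(\theta){\rm Re}(a)+\sin(\theta){\rm Im}(a)$. For $\Psi_{t}$, contractivity is immediate because $L_{\mathcal{A}_{t}}$ is the restriction of $L_{\mathcal{A}}$.
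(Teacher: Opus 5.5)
Your proposal is correct, and for the corollary itself it is the paper's proof. The paper chains Theorem~\ref{thm_approx_order_iso}, then Proposition~\ref{prop_qgh_distance_appoi} (with ${\rm d}^{\rm s}\leq\max\{\gamma_t,\gamma_t'\}=2\|\varphi_t-\varepsilon\|_{\ast}$), then Corollary~\ref{cor_qgh_implies_gh}, exactly as you do.

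You genuinely differ from the paper in how the ${\rm C}^{1}$-contractivity of $\Phi_t$ is secured, and your route is the sound one. In the proof of Theorem~\ref{thm_approx_order_iso} the paper cites only the operator-norm bound $\|\varphi_t\star a\|\leq\|\varphi_t\|_{L^{1}(\mathbb{G})}\|a\|$. That bound does not address the inequality actually required, $L_{\mathcal{A}_t}(\varphi_t\star a)\leq L_{\mathcal{A}}(a)$. Your argument is what supplies the hypothesis Proposition~\ref{prop_qgh_distance_appoi} needs. Since $\varphi_t$ is a state on $C(\mathbb{G})$, the $\alpha$-invariance of $L_{\mathcal{A}}$ handles self-adjoint $a$. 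That invariance rests on the identity $(\mu\star a)\star\phi=\mu\ast(a\star\phi)$ and the right-invariance of $L_{\mathbb{G}}$. For general $a$, you use that $a\mapsto\varphi_t\star a$ is $*$-preserving together with the $\sup_{\theta}$ formula of Remark~\ref{rem:aou_vs_op_system}.

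Your inequality also has a side benefit: it gives $\Phi_t({\rm dom}(L_{\mathcal{A}}))\subseteq{\rm dom}(L_{\mathcal{A}_t})$. Because $\Phi_t$ is norm-continuous and maps $\mathcal{A}$ onto $\mathcal{A}_t$, this is a valid reason for the density of ${\rm dom}(L_{\mathcal{A}_t})$ in $\mathcal{A}_t$ used in Proposition~\ref{prop:spaces_are_cqms}. Merely intersecting the dense subspace ${\rm dom}(L_{\mathcal{A}})$ with $\mathcal{A}_t$, as the paper does there, would not by itself give density.
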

\begin{proof}
This follows from Theorem \ref{thm_approx_order_iso}, Proposition \ref{prop_qgh_distance_appoi}, and Corollary \ref{cor_qgh_implies_gh}.  
\end{proof}


\section{Examples}\label{sec:examples}
\subsection{Classical compact groups acting on ${\rm C}^{*}$-algebras}\label{ss:recovering_classical_function_op_convolution} In this subsection, we investigate the implication of previous results when applied to the setting of classical compact groups acting on unital ${\rm C}^{*}$-algebras concretely represented over a compact homogeneous space, as mentioned in Remark \ref{rem:function_op_conv}. Suppose $G$ is a compact group, with associated Haar probability measure $\mu_{G}$. We let ${\rm Lip}(G)$ denote the space of complex-valued Lipschitz functions on $G$. Let $K\subset G$ be a subgroup of $G$ and let $X=G\setminus K$ denote our homogeneous space. We make the following assumptions on $X$ and $G$:
\begin{itemize}
    \item[(i)] The action $G\curvearrowright X$ is continuous and transitive. 
    \item[(ii)] $X$ supports a finite measure $\mu$ which is invariant under the (left) action of $G$, i.e. $\mu(gA)=\mu(A)$ for each $g\in G$ and $A\in \mathfrak{B}(X)$. 
    \item[(iii)] $G$ supports a $G$-invariant metric $d$, i.e. for each $g,h,\ell\in G$, $d(gh,g\ell)=d(h,\ell)$.  
\end{itemize}
At first glance, (ii) and (iii) may seem restrictive. Note, however, that if $X$ supports a finite measure, the measure $\hat{\mu}$ by 
\begin{align*}
    \hat{\mu}(A):=\int_{G}\mu(gA)d\mu_{G}(g)
\end{align*}
is invariant under the action of $G$. Likewise if $d$ is any metric on $G$, then it can be shown that the function $\hat{d}:G\times G\to \mathbb{R}^{+}$ by 
\begin{align*}
    \hat{d}(h,\ell)=\int_{G}d(g\ell,gh)d\mu_{G}(g)
\end{align*}
is a metric on $G$ which is invariant under the action of $G$. Now, assume $G$ acts on some unital ${\rm C}^{*}$-algebra $\mathcal{A} \subseteq \mathcal{B}(L^{2}(X, \mu))$. Thus, there exists a continuous unital group homomorphism $\sigma: G\rightarrow {\rm Aut}(\mathcal{A})$. 

If $\{\varphi_{t}\}_{t > 0} \subseteq L^{1}(G, \mu_{G})$ is an $L^{1}(\mathbb{G})$-approximate identity in the sense of Definition \ref{def:l_1_approx}, this means we have elements such that
\begin{itemize}
    \item[(i)] $\varphi_{t}(g) \geq 0$ for each $g \in G$ and $t > 0$;
    \item[(ii)] $\|\varphi_{t}\|_{L^{1}(G, \mu_{G})} = 1$ for all $t > 0$;
    \item[(iii)] For each $\delta > 0$, 
    \begin{gather*}
        \int\limits_{B(e, \delta)^{c}}\varphi_{t}(g)d\mu_{G}(g)\rightarrow 0,
    \end{gather*}
    \noindent as $t \rightarrow 0$. 
\end{itemize}

\begin{lemma}\label{approxid}
    Suppose $\{\varphi_{t}\}_{t>0}$ is an $L^{1}(G,\mu_{G})$ approximate identity as before. Then, for each $f\in C(G)$, $f*\varphi_{t}\to f$ uniformly, as $t\to 0$, where
    \begin{align*}
        f*\varphi_{t}(g)=\int_{G}f(h)\varphi_{t}(gh)d\mu_{G}(h).
    \end{align*}
\end{lemma}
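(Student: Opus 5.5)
The plan is to reduce the convolution to an average of translates of $f$ against the probability densities $\varphi_{t}$. Then I split the integral into a small ball around the identity, where uniform continuity of $f$ controls the error, and its complement, where property (iii) of the approximate identity controls it.

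\textbf{Step 1 (change of variables).} Fix $g \in G$ and substitute $k = gh$ in the defining integral. Left invariance of $\mu_{G}$ gives
\begin{gather*}
f*\varphi_{t}(g) = \int_{G} f(g^{-1}k)\varphi_{t}(k)\,d\mu_{G}(k).
\end{gather*}
This formula peaks at $k=e$, so the natural limit is $f(g^{-1})$, not $f(g)$. The statement as written therefore holds only after reconciling this with the group structure. I would do so in one of three ways: use the convention in which the convolution is read with $\varphi_{t}(h^{-1}g)$; assume $\varphi_{t}(k^{-1}) = \varphi_{t}(k)$, which is the case for heat kernels and central approximate identities, together with $f$ replaced by $\check f(g)=f(g^{-1})$; or simply prove $\sup_{g}|f*\varphi_{t}(g) - f(g^{-1})|\to 0$. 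All three reduce to the same estimate, so I would record that estimate and make the convention explicit in the write-up.

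\textbf{Step 2 (the core estimate).} Since $\varphi_{t}\ge 0$ and $\int_{G}\varphi_{t}\,d\mu_{G}=1$, I can write
\begin{gather*}
f*\varphi_{t}(g) - f(g^{-1}) = \int_{G}\bigl(f(g^{-1}k)-f(g^{-1})\bigr)\varphi_{t}(k)\,d\mu_{G}(k).
\end{gather*}
Fix $\epsilon>0$. Because $G$ is compact and $f$ is continuous, $f$ is uniformly continuous with respect to the metric $d$. Here I use that $d$ induces the topology of $G$, which I would state as a standing assumption. So there is $\delta>0$ with $|f(x)-f(y)|<\epsilon$ whenever $d(x,y)<\delta$. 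Left invariance (iii) of $d$ gives $d(g^{-1}k,g^{-1}) = d(k,e)$. Hence for $k \in B(e,\delta)$ the integrand is bounded by $\epsilon\,\varphi_{t}(k)$, uniformly in $g$. This is the key point where invariance of the metric converts pointwise continuity into uniformity in $g$.

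\textbf{Step 3 (splitting the integral).} On $B(e,\delta)$ the contribution is at most $\epsilon\int\varphi_{t} = \epsilon$. On $B(e,\delta)^{c}$ it is at most $2\|f\|_{\infty}\int_{B(e,\delta)^{c}}\varphi_{t}\,d\mu_{G}$, which tends to $0$ as $t\to 0$ by property (iii) of the approximate identity. Neither bound depends on $g$, so
\begin{gather*}
\limsup_{t\to 0}\,\sup_{g \in G}|f*\varphi_{t}(g)-f(g^{-1})|\le\epsilon
\end{gather*}
for every $\epsilon>0$, which gives uniform convergence.

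\textbf{Main obstacle.} The analysis itself is routine. The delicate points are the bookkeeping of the variable and inversion convention in Step 1 and the compatibility of $d$ with the topology of $G$ used in Step 2, and I would settle both explicitly before the estimate.
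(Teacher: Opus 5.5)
Your argument is correct. Its analytic core is the same as the paper's: split into a $\delta$-ball and its complement, control the ball by uniform continuity of $f$, use invariance of $d$ and $\mu_{G}$ to make the bound independent of $g$, and control the complement by property (iii). The two proofs differ exactly at the point you flagged in Step 1, and there you are right and the paper is not.

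The paper keeps the variable $h$ and splits around $h=g$. It then asserts $\int_{B(g,\delta)^{c}}\varphi_{t}(gh)\,d\mu_{G}(h)=\int_{B(e,\delta)^{c}}\varphi_{t}\,d\mu_{G}$. But substituting $k=gh$ and using left invariance, $d(g^{-1}k,g)=d(k,g^{2})$, gives
\[
\int_{B(g,\delta)^{c}}\varphi_{t}(gh)\,d\mu_{G}(h)=\int_{B(g^{2},\delta)^{c}}\varphi_{t}(k)\,d\mu_{G}(k).
\]
This tends to $1$, not $0$, whenever $d(g^{2},e)>\delta$. The kernel $\varphi_{t}(gh)$ concentrates at $h=g^{-1}$, so the lemma as literally stated is false. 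For instance, on $\mathbb{T}=\mathbb{R}/\mathbb{Z}$ with $f(x)=e^{2\pi i x}$,
\[
f*\varphi_{t}(x)=e^{-2\pi i x}\int e^{2\pi i k}\varphi_{t}(k)\,dk\to e^{-2\pi i x},
\]
which is not $f(x)$ in general.

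Your version, uniform convergence to $f(g^{-1})$, is the correct statement for the formula as written. The paper's own splitting becomes valid if the kernel is read as $\varphi_{t}(g^{-1}h)$, since then $d(gk,g)=d(k,e)$. Nothing later in the paper breaks, because Corollary~\ref{dzero} only evaluates at $g=e$, where all conventions agree.

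Two small remarks on your write-up. First, the symmetry option in Step 1 does not work as stated. For symmetric $\varphi_{t}$ the given formula still converges to $f(g^{-1})$, while $\check f*\varphi_{t}\to f$ holds with no symmetry assumption at all. So the symmetry hypothesis should simply be dropped. Second, your explicit assumption that $d$ induces the topology of $G$ is genuinely needed, since it is what makes a continuous $f$ uniformly continuous with respect to $d$. The paper uses it without saying so.
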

\begin{proof}
    Let $f\in C(G)$. Since $G$ is compact, $f$ is uniformly continuous. Let $\varepsilon>0$. By the uniform continuity, there is an $\delta>0$ such that for any $g,h$ such that $d(g,h)<\delta$, $|f(g)-f(h)|<\varepsilon/2$. Likewise, by (iii), there is an $t>0$ sufficiently small such that 
    \begin{align*}
        \int_{B(e,\delta)^{c}}\varphi_{t}(h)d\mu_{G}(h)<\varepsilon/4\|f\|_{\infty}.
    \end{align*}
    Now, note that for any $g\in G$, using (i) and (ii) and the same $\delta$ as before we have 
    \begin{eqnarray*}
    |f\ast\varphi_{R}(g)-f(g)| 
    & = &
    \bigg|\int\limits_{G}f(h)\varphi_{t}(gh)d\mu_{G}(h)-\int\limits_{G}f(g)\varphi_{t}(gh)d\mu_{G}(h)\bigg| \\
    & = &
    \bigg|\int\limits_{G}(f(h)-f(g))\varphi_{t}(gh)d\mu_{G}(h)\bigg| \\
    & \leq &
    \int\limits_{B(g, \delta)}|f(h)-f(g)|\varphi_{t}(gh)d\mu_{G}(h)+\int\limits_{B(g, \delta)^{c}}|f(h)-f(g)|\varphi_{t}(gh)d\mu_{G}(h) \\
    & \leq &
    \epsilon/2+2\|f\|_{\infty}\int\limits_{B(g, \delta)^{c}}\varphi_{t}(gh)d\mu_{G}(h).
    \end{eqnarray*}
    Now, by the invariance of the metric and Haar measure under the group action, we have that 
    \begin{align*}
        \int_{B(g,\delta)^{c}}\varphi_{t}(gh)d\mu_{G}(h)=\int_{B(e,\delta)^{c}}\varphi_{t}(h)d\mu(h).
    \end{align*}
    Thus, 
    \begin{align*}
        \|f*\varphi_{t}-f\|_{\infty}\leq\varepsilon/2 +2\|f\|_{\infty}\int_{B(e,\delta)^{c}}\varphi_{t}(h)d\mu_{G}(h)<\varepsilon,
    \end{align*}
    for $t>0$ sufficiently small. 
\end{proof}
\begin{corollary}\label{dzero}
    Let $d(g)=d(g,e)$. Then, $d*\varphi_{t}(e)\to 0$ as $t\to\infty$. 
\end{corollary}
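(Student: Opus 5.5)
Set $d(g) := d(g,e)$. Since $d$ is a metric on the compact group $G$, $d$ is continuous; indeed $|d(g)-d(h)| \leq d(g,h)$, so $d \in C(G)$. Hence $d$ is an admissible input for Lemma \ref{approxid}. Note that the convergence parameter there is $t \to 0$, while the statement here is phrased as $t\to\infty$. I will read the statement with the same limiting regime as in Lemma \ref{approxid} and condition (iii) of the approximate identity, namely $t \to 0$; this is a notational switch only. The plan is then to apply Lemma \ref{approxid} with $f = d$ and evaluate at $g = e$.

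The key steps are as follows. First, Lemma \ref{approxid} gives $\|d\ast\varphi_t - d\|_\infty \to 0$. Uniform convergence implies pointwise convergence, so in particular $d\ast\varphi_t(e) \to d(e) = d(e,e) = 0$, which is the claim.

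For robustness I would also give a direct estimate, so as not to depend on how Lemma \ref{approxid} is parametrized. Since $\varphi_t \geq 0$ and $\|\varphi_t\|_{L^1} = 1$, we have
\begin{gather*}
d\ast\varphi_t(e) = \int_G d(h)\,\varphi_t(h)\,d\mu_G(h).
\end{gather*}
Fix $\delta>0$ and split this integral over $B(e,\delta)$ and its complement. On $B(e,\delta)$ the integrand is at most $\delta\,\varphi_t(h)$, so that piece contributes at most $\delta$. On the complement it contributes at most $\mathrm{diam}(G)\int_{B(e,\delta)^c}\varphi_t\,d\mu_G$, and $\mathrm{diam}(G)$ is finite because $G$ is compact. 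By property (iii) this second piece tends to $0$. Hence $\limsup d\ast\varphi_t(e) \leq \delta$ for every $\delta>0$, and the limit is $0$.

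The only real obstacle is bookkeeping. One point is the $t\to 0$ versus $t\to\infty$ convention, which I would handle by reparametrizing (e.g. $t\mapsto 1/t$). The other is checking that the topology induced by $d$ agrees with the group topology, so that $d$ is continuous and property (iii) refers to genuine metric balls. I expect both to be minor.
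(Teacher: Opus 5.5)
Your argument is correct and matches the paper's intended (unwritten) proof: apply Lemma \ref{approxid} to $f=d$ and evaluate at $e$. Reading the limit in the regime of condition (iii) is also the right way to resolve the $t\to 0$ versus $t\to\infty$ mismatch.

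Keep your direct estimate $\int_G d(h)\varphi_t(h)\,d\mu_G(h)\le \delta+\mathrm{diam}(G)\int_{B(e,\delta)^c}\varphi_t\,d\mu_G$ as the main justification, though. With the kernel $\varphi_t(gh)$ as the lemma writes it, the substitution $u=gh$ shows $f\ast\varphi_t(g)\to f(g^{-1})$ rather than $f(g)$. So the lemma's conclusion is only safe at $g=e$, which is exactly where you use it.
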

Before proceeding to the next example, we make the following observation concerning the set $K$, the seminorm $\|\cdot\|_{\ast}$ and the constants $\gamma_{t}$ coming from our general theorems. Namely, considering the Lipschitz algebra ${\rm Lip}(G)$ along with the role of the Lip-norm being played by the standard Lipschitz seminorm ${\rm Lip}$ here, we have that 
\begin{gather*}
    K = \{f \in C(G):\; {\rm Lip}(f) \leq 1, f(e) = 0\} \subseteq {\rm Lip}_{1}(G).
\end{gather*}
\noindent By duality and previous remarks, $\varepsilon_{G} \equiv \delta_{e}$ with $C(G)^{*} \cong M(G)$, and thus
\begin{gather*}
    \|\mu\|_{\ast} = \sup\{|\mu(f)|: \; f \in K\} = \sup\bigg\{\bigg|\int\limits_{G}f(g)d\mu(g)\bigg|: \; f \in K\bigg\}.
\end{gather*}
\noindent Therefore, 
\begin{align*}
    \gamma_{t}:=\|\eta_{t}\|_{*}=\|\varphi_{t}-\delta_{e}\|_{*}=&\sup_{f\in\text{Lip}_{1}(G)}\left|\int_{G}f(g)\varphi_{t}(g)d\mu_{G}(g)-f(e)\right|\\
    =&\sup_{f\in\text{Lip}_{1}(G)}\left|\int_{G}\left(f(g)-f(e)\right)\varphi_{t}(g)d\mu_{G}(g)\right|\\
    \leq& \; d*\varphi_{t}(e)\to 0,
\end{align*}
by Corollary \ref{dzero}. Note that Lemma \ref{approxid} may be viewed as an alternative proof of the results in Lemma \ref{lem:seminorm_convergence} for this specific setting.

Clearly, our assumptions are satisfied in the further specialized setting when $X=G$ for a compact abelian group. However, in this setting, we have the advantage of constructing $\varphi_{t}$ in a manner which makes the corresponding family finite rank operators when considered as operators on $L^{2}(G,\mu_{G})$, where $\mu_{G}$ is the corresponding (left and right invariant) normalized Haar measure. Going forward, we will restrict to the case where $\mathcal{A}=C(G)$, the continuous functions on $G$. We recall the following definition. 
    \begin{definition}
        Let $G$ be a compact Abelian group and let $\hat{G}$ denote the associated dual group of characters. We say that a family of sets $\{\Gamma_{N}\}_{N\geq 1}\subset\hat{G}$ is a F\o lner sequence if for each $\gamma\in \hat{G}$, 
        \begin{align*}
            \frac{\#\{(\gamma\cdot\Gamma_{N})\triangle \Gamma_{N}\}}{\#\Gamma_{N}}\to 0
        \end{align*}
        as $N\to\infty$, where $\triangle$ is the symmetric difference. 
    \end{definition}
     From this, we construct a family of kernels for which the corresponding kernel-integral operators on $L^{2}(G,\mu_{G})$ are finite-rank. Let $\{\Gamma_{N}\}_{N \geq 1}$ be a F\o lner sequence in $\hat{G}$. Consider the kernel 
    \begin{align*}
        k_{N}(g, h)=\frac{1}{\#\Gamma_{N}}\left|\sum_{\xi\in\Gamma_{N}}\xi(gh^{-1})\right|^{2}.
    \end{align*}
    In \cite{coifman}, it was shown that this kernel is an $L^{1}(\mathbb{G})$-approximate identity in the sense of Definition \ref{def:l_1_approx}. Clearly, by expansion of the modulus squared, integration against this kernel corresponds to finite-rank projections onto $\Gamma_{N}\cdot\Gamma_{N}$. Let $k_{N}(g)=k_{N}(g,e)$, where $e\in G$ is the group unit. Following notation established in previous sections, let $C(G)_{N}=P_{N}C(G)P_{N}=k_{N}\star C(G)$. We then obtain the following specialization of Corollary \ref{mainconv}.
    \begin{corollary}
        Let $C(G)$ be the continuous functions, and let $C(G)_{N}=k_{N}\star C(G)$. Then, as $N\to\infty$, 
        \begin{align*}
            d_{GH}\left(S\left(C(G)_{N}\right),S\left(C(G)\right)\right)\to 0.
        \end{align*}
    \end{corollary}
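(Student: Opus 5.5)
The plan is to realize this as the special case of Corollary \ref{mainconv} with $\mathbb{G} = (C(G), \Delta)$ for the compact abelian group $G$, $\mathcal{A} = C(G)$, and co-action $\alpha = \Delta$ as in (\ref{eqn:classical_co_multiplication}). The index $t$ is replaced by $N\in\mathbb{N}$, and the net is $\varphi_N := k_N(\cdot)\,d\mu_G \in L^1(G,\mu_G)$. The first step is to check the standing hypotheses of Section \ref{sec:approximations_from_cqg_actions}:
\begin{itemize}
\item $\mathbb{G}$ is co-amenable, since $\varepsilon_G = \delta_e$ is a character on $C(G)$.
\item If $G$ is metrizable, then $C(G)$ is separable, so a bi-invariant regular Lip-norm $L_{\mathbb{G}}$ exists by \cite[Proposition 8.9]{Li09}. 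Concretely, one can take the Lipschitz seminorm of a bi-invariant metric. This is automatically bi-invariant since $G$ is abelian, and it is finite on the trigonometric polynomials $\mathcal{O}(G)$ provided the metric is chosen so that characters are Lipschitz, e.g.\ a length metric from a weighted sum over a countable generating set of $\hat G$.
\item The translation action $\Delta$ of $G$ on itself is ergodic, because translation-invariant continuous functions are constant.
\end{itemize}
The Lip-norm $L_{\mathcal{A}}$ from (\ref{eqn:induced_lip_norm_on_a}) is then available on $C(G)$.

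The second step is to verify that $\{\varphi_N\}$ is an $L^{1}(\mathbb{G})$-approximate identity in the sense of Definition \ref{def:l_1_approx}. Positivity of $k_N$ is clear from the modulus squared. Normalization follows from orthonormality of characters:
\[
\int_G k_N\,d\mu_G = \frac{1}{\#\Gamma_N}\sum_{\xi,\xi'\in\Gamma_N}\int_G \xi\overline{\xi'}\,d\mu_G = 1 .
\]
For the approximate identity property I would rely on \cite{coifman}, or argue directly. Expanding gives
\[
\hat{k}_N(\gamma) = \frac{\#\big(\Gamma_N\cap \gamma\Gamma_N\big)}{\#\Gamma_N}\to 1
\]
by the F\o lner condition. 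Hence for every trigonometric polynomial $f$ we get $\|k_N * f - f\|_{L^1}\to 0$. Since $\|k_N\|_1 = 1$ and trigonometric polynomials are dense in $L^1(G)$, this extends to all of $L^1(G)$. Commutativity gives both-sided convergence.

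The third step applies Corollary \ref{mainconv} along the net $N\to\infty$. This gives convergence of $(S(C(G)_N), d^{L_{C(G)_N}})$ to $(S(C(G)), d^{L_{C(G)}})$ in Gromov--Hausdorff distance, which is exactly the claim. To match the identification $C(G)_N = P_N C(G) P_N$, I would only remark that $k_N\star f$ has Fourier support in $\Gamma_N\Gamma_N^{-1}$. Convergence does not depend on this Toeplitz description, since the result only concerns $k_N\star C(G)$.

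I expect the main obstacle to be the first step. The general results need a bi-invariant regular Lip-norm, which requires separability (metrizability of $G$) and a metric making all characters Lipschitz. I would state metrizability as an implicit assumption and take $L_{\mathbb{G}}$ from \cite{Li09}.
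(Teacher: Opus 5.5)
Your proposal is correct and follows the paper's route. The paper obtains this corollary as a direct specialization of Corollary \ref{mainconv} to $G$ acting on $\mathcal{A}=C(G)$ by translation, taking $\{k_N\}$ to be an $L^{1}(\mathbb{G})$-approximate identity from \cite{coifman}; you instead verify that property directly via $\hat{k}_N(\gamma)=\#(\Gamma_N\cap\gamma\Gamma_N)/\#\Gamma_N\to 1$. Your checks of co-amenability, ergodicity and existence of a bi-invariant regular Lip-norm just make explicit what the paper leaves implicit, and metrizability need not be an extra assumption: a F\o lner sequence forces $\hat G$ to be countable, so $G$ is metrizable.
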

    We note here that in specializing to the case $G=\mathbb{T}^{d}$ for $1\leq d\leq \infty$, we obtain a generalization of the main result from \cite{LeimSuij24} to F\o lner sequences, as opposed to lattice points of $\mathbb{Z}^{d}$ contained in an expanding ball. 
\subsection{${\rm SU}_{q}(2)$ acting on the quantum sphere, and relations to Toeplitz quantization}
We first recall the definition of ${\rm SU}_{q}(2)$, its properties, and the corresponding quantum sphere as discussed in \cite{Podles87} (for greater detail on these points, see also \cite{Podles95, Timmermann08}). We fix $q \in \mathbb{R}$ such that $0 < |q| < 1$, and let $\mathcal{O}({\rm SU}_{q}(2))$ denote the universal unital $*$-algebra generated by $\alpha, \gamma$ subject to the relations 
\begin{gather*}
    \alpha\gamma = q\gamma\alpha, \;\;\;\; \alpha \gamma^{*} = q\gamma^{*}\alpha, \;\;\;\; \gamma\gamma^{*} = \gamma^{*}\gamma,
    \\ \alpha^{*}\alpha+\gamma^{*}\gamma = I, \;\;\;\; \alpha\alpha^{*}+q^{2}\gamma^{*}\gamma = I.
\end{gather*}
\noindent Define $\Delta: \mathcal{O}({\rm SU}_{q}(2))\rightarrow \mathcal{O}({\rm SU}_{q}(2))\otimes \mathcal{O}({\rm SU}_{q}(2))$ by its action on the generators
\begin{gather*}
    \Delta(\alpha) = \alpha\otimes \alpha-q\gamma^{*}\otimes \gamma,
    \\ \Delta(\gamma) = \gamma\otimes \alpha+\alpha^{*}\otimes \gamma,
\end{gather*}
\noindent and extending linearly; this endows $\mathcal{O}({\rm SU}_{q}(2))$ with the structure of a Hopf $*$-algebra. We let $C({\rm SU}_{q}(2))$ denote the ${\rm C}^{*}$-completion of $\mathcal{O}({\rm SU}_{q}(2))$; in this case, $\Delta$ extends to a well-defined co-multiplication on $C({\rm SU}_{q}(2))$, turning ${\rm SU}_{q}(2) = (C({\rm SU}_{q}(2)), \Delta)$ into a compact quantum group. Furthermore, that it is co-amenable follows from \cite[Theorem 2.12]{BedosMurphTus01}.

For the same parameter $q$, consider the universal unital $*$-algebra $P(S_{q\infty}^{2}) := \mathbb{C}\langle a, b\rangle/I_{q\infty}$, where $I_{q\infty}$ is the (two-sided) $*$-ideal inside the free $*$-algebra $\mathbb{C}\langle a, b\rangle$ generated by the relations
\begin{gather*}
    a^{*} = a, \;\;\;\; ba = q^{2}ab,
    \\ b^{*}b = -a^{2}+I,\;\;\;\; bb^{*} = -q^{4}a^{2}+I.
\end{gather*}
\noindent Then the \textit{quantum sphere} $S_{q\infty}^{2}$ is the ${\rm C}^{*}$-completion of $P(S_{q\infty}^{2})$, denoted $C(S_{q\infty}^{2})$.

We have an ergodic co-action of ${\rm SU}_{q}(2) \curvearrowright C(S_{q\infty}^{2})$ given by the unital $*$-homomorphism
\begin{gather*}
    \alpha_{q}: C(S_{q\infty}^{2})\rightarrow C(S_{q\infty}^{2})\otimes C({\rm SU}_{q}(2)), 
    \\ \alpha_{q}(a) := a \otimes 1_{{\rm SU}_{q}(2)}+c_{1}\otimes \gamma^{*}\gamma+b^{*}\otimes \alpha^{*}\gamma+b\otimes \gamma^{*}\alpha,
    \\ \alpha_{q}(b) := -qb^{*}\otimes \gamma^{2}+c_{1}\otimes \alpha\gamma+b\otimes \alpha^{2},
\end{gather*}
\noindent where $c_{1} := -(1+q^{2})a$, and then extending to all of $C(S_{q\infty}^{2})$. Thus, we are in the setting of Section \ref{sec:approximations_from_cqg_actions}. 

As discussed in the introduction, our approach in constructing approximating spaces for a target ${\rm C}^{*}$-algebra is motivated by kernel smoothing; such an idea also has significant connections to Toeplitz quantization. Our approach has potential use when the Toeplitz operators used in the quantization scheme have symbols originating in a noncommutative algebra, as compared to the classical setting. Such an approach has been taken up at various points in the recent past; see, for instance, \cite{Sontz13, Sontz14}, and \cite{Sontz16}. The latter is of particular interest, as a framework for constructing Toeplitz operators whose symbols come from ${\rm SU}_{q}(2)$ is investigated.

\subsection*{Funding}
D.~G.\ acknowledges support from the U.S.\ Department of Defense, Basic Research Offce, under Vannevar Bush Faculty Fellowship grant N00014-21-1-2946, managed by the Office of Naval Research, and from the U.S.\ Department of Energy under grant DE-SC0025101. T.~C. \ and G.~H.\ were supported as Postdoctoral Fellows from the first grant. 

\subsection*{Acknowledgments}
The authors would like to thank Malte Leimbach and Vishwa Dewage for useful discussions. Part of this work was supported by the Swedish Research Council under grant no. 2021-06594 while the authors D.~G. and G.~H.  were in residence at Institut Mittag-Leffler in Djursholm, Sweden, during spring 2026. The authors would like to thank the institute for their hospitality.



\begin{thebibliography}{99}

\bibitem{AgKaadKyed22}
{\sc K. \, Aguilar, J. \, Kaad, and D. \, Kyed},
{\it The  Podleś spheres converge to the sphere},
{\rm Comm. Math. Phys. 392 (2022), no. 3, 1029-1061}.

\bibitem{BedosMurphTus01}
{\sc E. \, Bédos, G. \, J. \, Murphy, and L. \, Tuset},
{\it Co-amenability of compact quantum groups},
{\rm J. Geom. Phys. 40 (2001), 130-153.}

\bibitem{bs}
{\sc A. \, Böttcher and B. \, Silbermann},
{\it Analyis of Toeplitz Operators},
{\rm Springer Monographs in Mathematics, Springer Berlin, Heidelberg, 2006.}





\bibitem{coifman}
{\sc R. \, Coifman and G. \, Weiss},
{\it Operators Associated with Representations of Amenable Groups Singular Integrals induced by Ergodic Flows, the Rotation Method and Multipliers. },
{\rm Studia Math. 47 (1973), 285-303.}

\bibitem{Connes89}
{\sc A. \, Connes},
{\it Compact metric spaces, Fredholm modules, and hyperfiniteness},
{\rm Ergodic Theory Dynam. Systems 9 (1989), 207-220.}

\bibitem{ConnesSuij21}
{\sc A. \, Connes, and W. \, D. \, van Suijlekom},
{\it Spectral truncations in noncommutative geometry and operator systems},
{\rm Comm. Math. Phys. 383 (2021), 2021-2067.}

\bibitem{dand_lizzi_mart}
{\sc F. \, D'Andrea, F. \, Lizzi, and P. \, Martinetti},
{\it Spectral geometry with a cut-off: topological and metric aspects},
{\rm J. Geom. Phys. 82 (2014), 18-45.}

\bibitem{deCommer17}
{\sc K. \, De Commer},
{\it Actions of compact quantum groups},
{\rm Topological quantum groups, Banach Center Publ. Vol. 111, Polish Acad. Sci. Inst. Math., Warsaw, 2017.}


\bibitem{Evans22}
{\sc L. \, C. \, Evans},
{\it Partial Differential Equations},
{\rm American Mathematical Society, Rhode Island U.S.A, 2022.}



\bibitem{ExelNg02}
{\sc R. \, Exel, and C. \, K. \, Ng},
{\it Approximation property of ${\rm C}^{*}$-algebraic bundles},
{\rm Math. Proc. Cambridge Philos. Soc. 132 (2002), no. 3, 509-522.}

\bibitem{FulscheGalke25},
{\sc R. \, Fulsche, and N. \, Galke},
{\it Quantum harmonic analysis on locally compact abelian groups},
{\rm J. Fourier Anal. and Appl. 31 (2025), no. 1, 13pp.}

\bibitem{ges}
{\sc Y. \, Gaudillot-Estrada, and W. \, D. \, Suijlekom},
{\it Convergence of spectral truncations for compact metric groups},
{\rm Int. Math. Res. Not. 13 (2025), 1-9.}

\bibitem{HajMattSzy03}
{\sc P. \, M. \, Hajac, R. \, Matthes, and W. \, Szymanski},
{\it Quantum real projective spacees, disc and spheres},
{\rm Algebras and Rep. Theory 6 (2003), 169-192.}

\bibitem{HuNeufangRuan10}
{\sc Z. \, Hu, M. \, Neufang, and Z. \, J. \, Ruan},
{\it Multipliers on a new class of Banach algebras, locally compact quantum groups, and topological centres},
{\rm Proc. London Math. Soc. 100:(2) (2010), pp.429-458.}


\bibitem{kk}
{\sc J. \, Kaad, and D. \, Kyed},
{\it The quantum metric structure of quantum SU(2)},
{\rm Memoirs of the European Mathematical Society 18, EMS Press, Berlin, 2025.}

\bibitem{kk_two}
{\sc J. \, Kaad, and D. \, Kyed},
{\it A comparison of two quantum distances},
{\rm Math. Phys., Anal. and Geom. 26:(1), (2023), art. 8}.

\bibitem{Kerr03}
{\sc D. \, Kerr},
{\it Matricial quantum Gromov-Hausdorff distance},
{\rm J. Funct. Anal 205 (2003), iss. 1, 132-167.}

\bibitem{KerrLi09}
{\sc D. \, Kerr, and H. \, Li},
{\it On Gromov-Hausdorff convergence for operator  metric spaces},
{\rm J. Op. Theory 62 (2009), no. 1, pp.83-109.}

\bibitem{Lat16}
{\sc F. \, Latrémolière},
{\it The quantum Gromov-Hausdorff propinquity},
{\rm Trans. Amer. Math. Soc. 368 (2016), no. 1, 365-411.}

\bibitem{Lat22}
{\sc F. \, Latrémolière},
{\it The Gromov-Hausdorff propinquity for metric spectral triples},
{\rm Adv. Math. 404 (2022), no. 108393, 56.}

\bibitem{Leimbach25}
{\sc M. \, Leimbach},
{\it Convergence of Peter-Weyl truncations of compact quantum groups},
{\rm J. Noncommut. Geom. (2025), published online first, DOI 10.4171/JNCG/634.}

\bibitem{LeimSuij24}
{\sc M. \, Leimbach, and W. \, D. \, van Suijlekom},
{\it Gromov-Hausdorff convergence of spectral truncations for tori},
{\rm Adv. Math. 439 (2024) 109496}.

\bibitem{Li03}
{\sc H. \, Li},
{\it ${\rm C}^{*}$-algebraic quantum Gromov-Hausdorff distance},
{\rm preprint (2003), arXiv:math/0312003.}

\bibitem{Li06}
{\sc H. \, Li},
{\it Order-unit quantum Gromov-Hausdorff distance},
{\rm J. Funct. Anal. 231 (2006), no. 2, 312-360. MR 2195335}

\bibitem{Li09}
{\sc H. \, Li},
{\it Compact quantum metric spaces and ergodic actions of compact quantum groups},
{\rm J. Funct. Anal. 256 (2009), no. 10, 3368-3408. MR 2504529}

\bibitem{OzawaRieffel05}
{\sc N. \, Ozawa, and M. \, A. \, Rieffel},
{\it Hyperbolic group ${\rm C}^{*}$-algebras and free-product ${\rm C}^{*}$-algebras as compact quantum metric spaces},
{\rm Canadian J. Math. 57 (2005), no. 5, 1056-1079.}

\bibitem{paulsen}
{\sc V. \, I. \, Paulsen},
{\it Completely Bounded Maps and Operator Algebras},
{\rm Cambridge University Press, Cambridge, 2002}.


\bibitem{Podles87}
{\sc P. \, Podleś},
{\it Quantum spheres},
{\rm Lett. Math. Phys. 14 (1987), 193-202.}

\bibitem{Podles95}
{\sc P. \, Podleś},
{\it Symmetries of quantum spaces. Subgroups and quotient spaces of quantum ${\rm SU}(2)$ and ${\rm SO}(3)$ groups.}
{\rm Comm. Math. Phys. 170 (1995), pp.1-20.}

\bibitem{rieffel}
{\sc M. \, Rieffel},
{\it Metrics on states from actions of compact groups},
{\rm Doc. Math. 3 (1998), 215-229}.

\bibitem{rieffel_two}
{\sc M. \, Rieffel},
{\it Gromov-Hausdorff distance for quantum metric spaces},
{\rm Mem. Amer. Math. Soc. 168 (2004), no. 768, 73 pp}.

\bibitem{rieffel_three}
{\sc M. \, Rieffel},
{\it Convergence of Fourier truncations for compact quantum groups and finitely generated groups},
{\rm J. Geom. Phys. 192 (2023), 104921.}


\bibitem{Sontz13}
{\sc S. \, B. \, Sontz},
{\it A Reproducing Kernel and Toeplitz Operators in the Quantum Plane},
{\rm Comm. in Math. 21 (2013), 137-160.}

\bibitem{Sontz14}
{\sc S. \, B. \, Sontz},
{\it Paragrassmann Algebras as Quantum Spaces, Part II: Toeplitz Operators},
{\rm J. Operator Theory 71 (2014), 411-426.}

\bibitem{Sontz16}
{\sc S. \, B. \, Sontz},
{\it Toeplitz Quantization for Non-commuting Symbol Spaces such as ${\rm SU}_{q}(2)$},
{\rm Comm. in Math. 24 (2016), iss. 1.}

\bibitem{Suij21}
{\sc W. \, D. \, van Suijlekom},
{\it Gromov-Hausdorff convergence of state spaces for spectral truncations},
{\rm J. Geom. Phys. 162 (2021), 104075}.

\bibitem{Takesaki03}
{\sc M. \, Takesaki},
{\it Theory of Operator Algebras I},
{\rm Springer-Verlag, New York, 2003.}

\bibitem{Timmermann08}
{\sc T.\ Timmermann},
{\it An Invitation to Quantum Groups and Duality: From Hopf Algebras to Multiplicative Unitaries and Beyond},
{\rm EMS Textbooks in Mathematics 5, EMS Press, 2008}.

\bibitem{Toyota23}
{\sc R. \, Toyota},
{\it Quantum Gromov-Hausdorff convergence of spectral truncations for groups with polynomial growth},
{\rm preprint, (2023), arXiv:2309.13469.}

\bibitem{Werner84}
{\sc R. \, Werner},
{\it Quantum harmonic analysis on phase space},
{\rm J. Math. Phys. 25 (1984), no. 5, 1404-1411.}

\end{thebibliography}
\end{document}